\def\CC{\mathbb{C}}
\def\NN{\mathbb{N}}
\def\PP{\mathbb{P}}
\def\QQ{\mathbb{Q}}
\def\RR{\mathbb{R}}
\def\mb{\mathbf}
\def\bt{\mathbf{t}}
\def\bt{\mathbf{t}}
\newtheorem{thm}{Theorem}[section]
\newtheorem{prop}[thm]{Proposition}
\newtheorem{lem}[thm]{Lemma}
\newtheorem{cor}[thm]{Corollary}
\newtheorem{rem}[thm]{Remark}
\let\oldrem\rem
\renewcommand{\rem}{\oldrem\normalfont}
\newtheorem{exa}[thm]{Example}
\let\oldexa\exa
\renewcommand{\exa}{\oldexa\normalfont}
\begin{document}

\title{Almost-toric Hypersurfaces}
\author{Bo Lin}
\address{Department of Mathematics, University of California, CA 94720, USA}
\email{linbo@math.berkeley.edu}
\
\thanks{The author thanks Bernd Sturmfels for his guidance, Ralph Morrison for his insightful suggestions and
Nathan Ilten for discussions. This work was supported by the Thematic Program of National Institute for
Mathematical Sciences, Daejeon, Korea, which hosted the author in the summer of 2014}

\begin{abstract}
  An almost-toric hypersurface is parameterized by monomials multiplied by polynomials in one extra
  variable.
  We determine the Newton polytope of such a hypersurface, and apply this to give an
  algorithm for computing the implicit equation.
\end{abstract}

\maketitle

\section{Introduction and Formula} \label{s:intro}

Toric varieties are parameterized by monomials. They
form an important and rich class of examples in algebraic geometry and often provide a testing ground for
theorems \cite{CLS11}. Here we work with toric varieties that need not be normal.
We fix an algebraically closed field $K$, and we set
$K^{*}=K \backslash \{0\}$. Our first ingredient is an arbitrary projective toric
variety of codimension $2$ in $\mathbb{P}^{n+1}$. This is defined as follows.

Fix an $n\times (n+2)$ integer matrix $A$ whose columns span the lattice $\mathbb{Z}^n$:
\[
A\,\,=\,\,\begin{bmatrix}
\mb{a}_0 & \mb{a}_1 & \ldots & \mb{a}_{n+1} \\
\end{bmatrix}.
\]
The column vectors $\mb{a}_i$ correspond to Laurent monomials
$\bt^{\mb{a}_i}=t_{1}^{a_{1,i}}t_{2}^{a_{2,i}}\ldots t_{n}^{a_{n,i}}$
 in the variables  $\bt=(t_1,t_2,\ldots,t_{n})$.
 These $n+2$ monomials specify a monomial map
\[
\Phi_{A}: (K^*)^n \rightarrow (K^*)^{n+2}, \,
 \bt\to (\bt^{\mb{a}_0},\bt^{\mb{a}_1},\ldots,\bt^{\mb{a}_{n+1}}).
\]
Throughout this paper we assume that all columns of $A$ sum to the
same positive integer $d$. Under this hypothesis, we obtain an induced  map
$\, \Phi_{A}: (K^*)^n \rightarrow  \PP^{n+1} $. The toric variety $X_A$ is the
closure in $\PP^{n+1}$ of the image of $\Phi_A$.
The degree of $X_A$ is the normalized volume of the polytope ${\rm conv}(A)$, and
the equations defining $X_A$ form the {\em toric ideal},
a well-studied object in combinatorial commutative algebra
and its applications \cite{Stu96}.

A natural extension of toric theory is the study of
{\em complexity one T-varieties} \cite{IS11}.
There are varieties with an action of a torus whose
general orbits have codimension one. They can be
viewed as a family of (possibly reducible) toric varieties over a curve. Our aim here
is to explore such T-varieties from the point of view
of symbolic computation. For simplicity we assume
that T-varieties are rational, i.e. the underlying curve is rational, and we focus on
projective hypersurfaces.

Our second ingredient is a vector of univariate polynomials in a new variable $x$:
\begin{equation}\label{def:Y}
{\bf f} \,= \,
\bigl(f_{0}(x),f_{1}(x),\ldots,f_{n+1}(x)\bigr) \,\, \in \,K[x]^{n+2}.
\end{equation}
This vector specifies a parametric curve $Y_{\mathbf{f}} \subset \PP^{n+1}$,
namely the closure of the set of points
$(f_{0}(x):f_{1}(x):\ldots:f_{n+1}(x))$.
Let $Z_{A,{\bf f}}$ denote the
 \emph{Hadamard product} in $\PP^{n+1}$ of the toric variety $X_A$ and the curve $Y_\mathbf{f}$.
 By definition, this is the Zariski closure of the~set
\[
\bigl\{(\bt^{\mb{a}_0}f_{0}(x):\bt^{\mb{a}_1}f_{1}(x):\ldots:\bt^{\mb{a}_{n+1}}f_{n+1}(x))\in \PP^{n+1}
\,|\,\,\bt\in (K^{*})^{n},x\in K \bigr\} \,\, \subset \,\, \PP^{n+1}.
\]
Under some mild hypotheses (see Theorem \ref{thm:main}(a)),
the variety  $Z_{A,{\bf f}}$ has codimension $1$,
and we call it the {\em almost-toric hypersurface}
associated with  $(A,{\bf f})$. We shall present a fast
method for implicitizing $Z_{A,{\bf f}}$. The output of our algorithm
is the irreducible polynomial
in $K[u_0,u_1,\ldots,u_{n+1}]$ that vanishes on $Z_{A,{\bf f}}$.
The torus action given by $A$ ensures
 that its Newton polytope ${\rm Newt}(Z_{A,{\bf f}}) $
lies in a plane in $ \RR^{n+2}$, so it is a polygon.
Our first main result is the following
combinatorial formula for this Newton polygon.

The ingredients in our formula are two matrices, which we now define.
The {\em Pl\"{u}cker matrix} associated with $A$ is the
$(n+2) \times (n+2)$-matrix $P_A = (p_{ij}) $ with entries
\[
p_{ij}=\begin{cases}
\frac{1}{\delta} (-1)^{i+j}\det(A_{[i,j]}), & i<j; \\
-p_{ji}, & i>j; \\
0, & i=j,
\end{cases}
\]
where $\delta$ is the greatest common divisor of all $\det(A_{[i,j]})$.
Here $A_{[i,j]}$ is the $n \times n$ submatrix of $A$
obtained by deleting the columns ${\bf a}_i$ and ${\bf a}_j$.
The Pl\"ucker matrix $P_A$ is skew-symmetric and has rank $2$,
and its row space is the kernel of $A$. The latter property implies that
all rows and all columns of $P_A$ sum to zero.

The {\em valuation matrix} associated with ${\bf f}$ is
an integer matrix $V_{\bf f}$ with $n+2$ rows that is defined as follows.
Since $K$ is algebraically closed, each of our polynomials
$f_i(x)$ factors into linear factors in $K[x]$.
Let $g_{1}(x),g_{2}(x),\ldots,g_{m}(x)$ be the list of all distinct linear factors of
$f_0(x) f_1(x) \ldots f_{n+1}(x)$.
We write ${\rm ord}_{g_j}f_i$ for the order of vanishing of
$f_i(x)$ at the unique root of $g_j(x)$.
We organize these numbers into the vectors
\[
\mb{u}_{j}=(\text{ord}_{g_{j}}f_{0},\text{ord}_{g_{j}}f_{1},\ldots,\text{ord}_{g_{j}}f_{n+1})\in \NN^{n+2}
\qquad \hbox{ for $1\le j\le m$.}
\]
We now aggregate these vectors according to the lines they span.
Let $S=\{\mb{u}_{1},\ldots,\mb{u}_{m}\}$. If two vectors in $S$ are linearly dependent, then we delete them
and add their sum to the set. We repeat this procedure. After finitely many steps, we end up with
a new set $\,S'=\{\mb{v}_{1},\mb{v}_{2},\ldots,\mb{v}_{\ell}\}\,$
whose vectors span distinct lines. The {\em valuation matrix} is
\[
V_{\bf f} \,\,= \,\,
\begin{bmatrix}\,
\mb{v}_{1}^{T} & \mb{v}_{2}^{T} & \ldots &\mb{v}_{l}^{T} & (-\sum_{j=1}^{l}{\mb{v}_{j}})^{T}
\end{bmatrix}.
\]
The last vector represents the valuation at $\infty$.
It ensures that the rows of $V_{\bf f}$ sum to zero.
The following theorem allows us to derive the Newton polygon
from $A$ and ${\bf f}$.

\begin{thm} \label{thm:main}
The edges of the Newton polygon of $Z_{A,{\bf f}}$ are the columns of the product of
the Pl\"{u}cker matrix $P_A$ and the valuation matrix $V_{\bf f}$.
More precisely, given $A$ and ${\bf f}$,
\begin{itemize}
\item[(a)] if $\,{\rm rank}(P_A \cdot V_{\bf f}) = 0\,$ then $Z_{A,{\bf f}}$ is not a hypersurface;
\item[(b)] if $\,{\rm rank}(P_A \cdot V_{\bf f}) = 1\,$ then $Z_{A,{\bf f}}$ is a toric hypersurface;
\item[(c)] if $\,{\rm rank}(P_A \cdot V_{\bf f}) = 2\,$ then $Z_{A,{\bf f}}$ is a hypersurface but not
    toric.
The directed edges of the Newton polygon of $Z_{A,{\bf f}}$ are the nonzero column vectors of
$P_{A} \cdot V_{\bf f}$.
\end{itemize}
\end{thm}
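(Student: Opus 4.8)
The plan is to determine $\mathrm{Newt}(Z_{A,\mathbf f})$ by tropical geometry, exploiting the classical dictionary between the Newton polytope of a hypersurface and its tropicalization together with the fact that $Z_{A,\mathbf f}$ is, by construction, the Hadamard product $X_A\star Y_{\mathbf f}$. Because tropicalization turns coordinatewise multiplication into addition, $\mathrm{Trop}(Z_{A,\mathbf f})=\mathrm{Trop}(X_A)+\mathrm{Trop}(Y_{\mathbf f})$ as weighted polyhedral fans in $\mathbb R^{n+2}$ (everything read modulo the line $\mathbb R\mathbf 1$). The first summand is easy: $X_A$ is the image of the monomial map attached to $A$, so $\mathrm{Trop}(X_A)$ is the linear space $L:=\mathrm{colspan}(A^{T})=\mathrm{rowspan}(A)\subseteq\mathbb R^{n+2}$, of dimension $n$, with weight one because the columns of $A$ span $\mathbb Z^{n}$; note $\mathbf 1\in L$ since every column of $A$ has coordinate sum $d$. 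The second summand comes from a local analysis of the rational curve $Y_{\mathbf f}$: it meets the toric boundary of $\mathbb P^{n+1}$ exactly over the roots of the $g_j$ and over $x=\infty$, and expanding the $f_i$ at each such point shows that $\mathrm{Trop}(Y_{\mathbf f})$ is the one–dimensional balanced fan whose rays have weighted primitive generators $\mathbf u_1,\dots,\mathbf u_m$ and $-\sum_j\mathbf u_j$; here the balancing condition is nothing but $\sum_j\mathrm{ord}_{g_j}f_i=\deg f_i$. Aggregating collinear rays — precisely the operation that builds $V_{\mathbf f}$ from $S$ — the weighted rays of $\mathrm{Trop}(Y_{\mathbf f})$ become the columns $\mathbf v_1,\dots,\mathbf v_\ell,-\sum_i\mathbf v_i$ of $V_{\mathbf f}$.

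Combining, $\mathrm{Trop}(Z_{A,\mathbf f})=L+\mathrm{Trop}(Y_{\mathbf f})$ is a fan with lineality space $L$, so modulo $L$ it is a one–dimensional balanced fan in the two–dimensional quotient $\mathbb R^{n+2}/L$, whose weighted rays are the images of the columns of $V_{\mathbf f}$. Now the skew–symmetric matrix $P_A$ enters: it has rank $2$, its row (= column) space is $\ker A=L^{\perp}$, and its kernel is exactly $L$, so $\mathbf v\mapsto P_A\mathbf v$ induces an isomorphism $\mathbb R^{n+2}/L\xrightarrow{\ \sim\ }L^{\perp}$. Consequently $\mathrm{rank}(P_A V_{\mathbf f})$ equals the dimension of the linear span of the rays of $\mathrm{Trop}(Y_{\mathbf f})$ taken modulo $L$. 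If this rank is $0$, then $\mathrm{Trop}(Y_{\mathbf f})\subseteq L=\mathrm{Trop}(X_A)$, so $\dim\mathrm{Trop}(Z_{A,\mathbf f})=n$ and $Z_{A,\mathbf f}$ is not a hypersurface, which is part (a). Otherwise the rank is $1$ or $2$, $\dim\mathrm{Trop}(Z_{A,\mathbf f})=n+1$, and $Z_{A,\mathbf f}$ is an irreducible hypersurface $V(F)$ (irreducible as the closure of the image of an irreducible variety).

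It remains to pass from $\mathrm{Trop}(V(F))$ back to $\mathrm{Newt}(F)$. The $A$–torus action on $\mathbb P^{n+1}$ preserves $Z_{A,\mathbf f}$, so $F$ is a semi–invariant and $\mathrm{Newt}(F)$ lies in a coset of $\ker(A)_{\mathbb R}=L^{\perp}$; thus it is a polygon. By the standard hypersurface–Newton-polytope duality, $\mathrm{Trop}(V(F))$ is the codimension–one skeleton of the normal fan of $\mathrm{Newt}(F)$, each top cone weighted by the lattice length of the corresponding edge; read modulo $L$, the weighted rays of $\mathrm{Trop}(V(F))$ in $\mathbb R^{n+2}/L$ are the edge normals of $\mathrm{Newt}(F)$, weighted by lattice length. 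The natural pairing identifies $L^{\perp}$ (where edge vectors live) and $\mathbb R^{n+2}/L$ (where normals live) as dual planes, and the skew form carried by $P_A$ is exactly a quarter–turn composed with this identification: for any representative $w$ of a class in $\mathbb R^{n+2}/L$ one has $P_A w\in L^{\perp}$ and $\langle P_A w,\,w\rangle=-\langle P_A w,\,w\rangle=0$. Tracking weights and orientations through this identification, the directed edge of $\mathrm{Newt}(F)$ with normal $\bar w$ is precisely $P_A w$; applied to the weighted rays $\mathbf v_1,\dots,\mathbf v_\ell,-\sum_i\mathbf v_i$ this says the directed edges of $\mathrm{Newt}(Z_{A,\mathbf f})$ are the nonzero columns of $P_A V_{\mathbf f}$. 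This also distinguishes (b) from (c): rank $1$ means the polygon degenerates to a segment, i.e. $F$ is a binomial and $Z_{A,\mathbf f}$ is a translate of a toric hypersurface, while rank $2$ gives a genuine polygon and a non-toric hypersurface.

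The step I expect to be the real work is the last one: getting the duality right in the correct lattices so that the quarter–turn is literally multiplication by $P_A$ (not by some scalar multiple or a different skew form), matching the numerical data — lattice length of an edge against the weight of the dual tropical ray, traced correctly through the Minkowski sum with $L$ and through the collinear aggregation defining $V_{\mathbf f}$ — and matching the cyclic orientation, so that the columns of $P_A V_{\mathbf f}$ emerge as a consistently ordered sequence of directed edges that closes up (consistent with $P_A V_{\mathbf f}$ having zero column sum, since $V_{\mathbf f}$ has zero row sum). A secondary technical point, needed to make the first two paragraphs rigorous, is justifying $\mathrm{Trop}(X_A\star Y_{\mathbf f})=\mathrm{Trop}(X_A)+\mathrm{Trop}(Y_{\mathbf f})$ with multiplicities, and the weighted description of $\mathrm{Trop}(Y_{\mathbf f})$; under the mild hypotheses accompanying part (a) — in particular that the parameterization of $Z_{A,\mathbf f}$ is generically injective — these follow from compatibility of tropicalization with multiplication together with the local study of $Y_{\mathbf f}$ at the finitely many points lying over the toric boundary.
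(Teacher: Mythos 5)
Your plan mirrors the paper's almost step for step: write $Z_{A,\mathbf f}$ as the Hadamard product $X_A\star Y_{\mathbf f}$, so that $\mathrm{trop}(Z_{A,\mathbf f})=\mathrm{trop}(X_A)+\mathrm{trop}(Y_{\mathbf f})$ (the paper invokes Cueto's thesis, Corollary 3.3.6); identify $\mathrm{trop}(X_A)$ with the row span $L$ of $A$; compute $\mathrm{trop}(Y_{\mathbf f})$ from the orders of vanishing of the $f_i$ at the roots of the $g_j$ and at $\infty$; observe that $P_A$ is the skew form whose kernel is $L$ and whose image is $L^{\perp}=\ker A$, so that $\mathbf v\mapsto P_A\mathbf v$ sends the dual tropical ray to the edge direction; and then read off the rank trichotomy (a)--(c). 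All of this matches the paper's Lemmas \ref{lem:gon}--\ref{lem:Y}, Corollary \ref{cor:trop}, and Proposition \ref{prop:direc}.

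However, the crucial quantitative step -- the claim that the \emph{length} of the edge of $\mathrm{Newt}(Z_{A,\mathbf f})$ dual to $\mathbf v_j$ is exactly the lattice content of $P_A\mathbf v_j$, not merely a scalar multiple of it -- is flagged in your last paragraph as ``the real work'' but is never actually carried out. You assert, without proof, that the rays of $\mathrm{trop}(Y_{\mathbf f})$ carry weight equal to the content of $\mathbf u_j$, that these weights behave multiplicatively under the Minkowski sum with $L$, and that after the identification via $P_A$ the weight matches the lattice length of the edge of the polygon. None of these is obvious, and the paper does not establish them by direct bookkeeping. Instead, Proposition \ref{prop:mult} lifts to an auxiliary variety $Z\subset\mathbb P^{n+1}\times\mathbb C^m$ parameterized by the monomials together with the individual factors $x-z_1,\dots,x-z_m$, for which the initial ideals $\mathrm{in}_{\sigma}(I(Z))$ are shown to be prime (a toric piece plus linear polynomials), so every maximal cone of $\mathrm{trop}(Z)$ has multiplicity $1$. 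The projection onto $\mathbb R^{n+2}$ is then governed by the Sturmfels--Tevelev push-forward lattice-index formula $\mathrm{mult}(\sigma')=\mathrm{mult}(\sigma)\cdot[N_{\sigma'}:\phi(N_{\sigma})]$, and the explicit computation of $[N_{\sigma'}:\phi(N_{\sigma})]$ as a quotient of gcds of maximal minors is precisely what produces the content of the $j$-th column of $P_A V_{\mathbf f}$, with the common factor $\delta$ cancelling. This lifting-and-pushforward argument is what closes the gap, and it is the one genuinely nontrivial idea your proposal is missing: without it you have the edge \emph{directions} (from skew-symmetry of $P_A$) but not the edge \emph{lengths}, hence not the claim that the columns of $P_A V_{\mathbf f}$ are the directed edges on the nose.
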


The rest of this paper is organized as follows.
In Section \ref{s:prf} we present the proof
of Theorem \ref{thm:main}, and we illustrate
this result with several small examples.
Earlier work of Philippon and Sombra \cite[Proposition 4.1]{PS08} yields
an expression for the degree of the almost-toric hypersurface
$Z_{A,{\bf f}}$ as a certain sum of integrals over ${\rm conv}(A)$.

Section \ref{s:alg} is concerned with computational issues.
Our primary aim is to give a fast algorithm for computing the
implicit equation of the almost-toric hypersurface $Z_{A,{\bf f}}$.
We develope such an algorithm and implement it in {\tt Maple 17}. A case
study of hard implicitization problems demonstrate
that our method performs very well.

The mathematics under the hood of Theorem \ref{thm:main} is
 {\em tropical algebraic geometry} \cite{MS14}.
 The proof relies on a technique known as
{\em tropical implicitization} \cite{STY07, ST08, SY08}.
Thus this article offers a concrete demonstration
that tropical implicitization can serve as an efficient
and easy-to-use tool  in computer algebra. It lays the
foundation for future work that will extend toric algebra \cite{Stu96}
and its numerous applications to the almost-toric setting
of complexity one T-varieties \cite{IS11}.

\vfill

\section{Proof,  Examples, and the Philippon-Sombra Formula }\label{s:prf}
In this section we prove Theorem \ref{thm:main}. In our proof we use the technique of
\emph{tropicalization}. We then present examples to illustrate our main theorem. We also consider an easier
task: finding the degree of the almost-toric hypersurface. We compare our result to existing results,
including the Philippon-Sombra Formula in \cite[Proposition 1.2]{PS08} and another formula in \cite{SY08}.

First we prove some of the claims made in Section \ref{s:intro}.

\begin{lem}\label{lem:gon}
Let $Z_{A,{\bf f}}$ be an almost-toric hypersurface. Then ${\rm Newt}(Z_{A,{\bf f}}) $ is at most
$2$-dimensional in $\RR^{n+2}$.
\end{lem}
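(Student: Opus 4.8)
The plan is to exhibit an explicit torus action on $Z_{A,\mathbf{f}}$ whose orbit of the generic point already spans an $(n+1)$-dimensional affine subspace of the hypersurface, so that on the level of Newton polytopes the defining equation is forced to be constant along $n$ independent directions. Concretely, the kernel of $A$ acting on the homogeneous coordinates $u_0,\dots,u_{n+1}$ preserves the parametrization: if $\bm{\lambda}=(\lambda_0,\dots,\lambda_{n+1})$ lies in the subtorus $\{\prod_i \lambda_i^{a_{k,i}}=1 \text{ for all } k\}$, then scaling $u_i\mapsto \lambda_i u_i$ sends a point $(\mathbf{t}^{\mathbf{a}_0}f_0(x):\cdots:\mathbf{t}^{\mathbf{a}_{n+1}}f_{n+1}(x))$ to another point of the same form (absorb $\lambda_i$ by rescaling $\mathbf{t}$, which is possible exactly because the columns of $A$ span $\mathbb{Z}^n$). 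Hence $Z_{A,\mathbf{f}}$ is invariant under this $((n+2)-n-?)$-dimensional torus; since $A$ has rank $n$, its kernel in $\mathbb{Z}^{n+2}$ has rank $2$, but projectively one of these directions is the all-ones vector (all columns of $A$ sum to $d$), which acts trivially on $\mathbb{P}^{n+1}$. Wait — I need to be careful about which torus is the \emph{symmetry} group; let me instead phrase it via the lattice of characters killed by the Newton polytope.

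The cleaner route is: first, $Z_{A,\mathbf{f}}$ is invariant under scaling $u_i \mapsto s^{a_{k,i}} u_i$ for each fixed $k$ and each $s \in K^*$ (this is reparametrizing $t_k \mapsto s\, t_k$), and these $n$ one-parameter subgroups are independent because $\operatorname{rank}(A)=n$. Therefore the irreducible defining polynomial $F(u_0,\dots,u_{n+1})$ is a relative invariant of this $n$-dimensional torus: $F(s^{a_{k,0}}u_0,\dots,s^{a_{k,n+1}}u_{n+1}) = s^{c_k} F(u)$ for some constant $c_k$. Comparing monomials $u^{\mathbf{b}}$ and $u^{\mathbf{b}'}$ appearing in $F$, one gets $A\mathbf{b}=A\mathbf{b}'$, i.e. $\mathbf{b}-\mathbf{b}' \in \ker(A)$. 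Since $\ker(A)$ has rank $2$ (as $A$ is $n\times(n+2)$ of rank $n$), all exponent vectors of $F$ lie in a single coset of a rank-$2$ sublattice, so $\operatorname{Newt}(Z_{A,\mathbf{f}})$ lies in a $2$-dimensional affine plane in $\mathbb{R}^{n+2}$. That is exactly the assertion.

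So the steps, in order, are: (i) identify the $n$ scaling symmetries coming from the torus acting on the source $(K^*)^n$ and check they preserve the parametrized set, hence preserve $Z_{A,\mathbf{f}}$; (ii) deduce that the irreducible implicit equation $F$ is semi-invariant under each of them; (iii) translate semi-invariance into the linear condition $A(\mathbf{b}-\mathbf{b}')=0$ on pairs of exponent vectors of $F$; (iv) invoke $\operatorname{rank}(A)=n \Rightarrow \dim\ker(A)=2$ to conclude $\operatorname{Newt}(Z_{A,\mathbf{f}})$ lies in an affine plane, i.e. is at most $2$-dimensional; one should also remark that the homogeneity (all columns summing to $d$) is one of these relations and accounts for the embedding in $\mathbb{P}^{n+1}$ being well-posed, but it does not further drop the dimension below what the rank count gives.

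The main obstacle, and the point deserving the most care, is step (i)--(ii): making rigorous that $Z_{A,\mathbf{f}}$, being a \emph{closure}, is genuinely invariant under the torus (closures of invariant sets under continuous group actions are invariant, which is fine) and, more delicately, that a torus acting on $\mathbb{P}^{n+1}$ and fixing an irreducible hypersurface $\{F=0\}$ must act on $F$ by a character — this is the standard fact that the line $K\cdot F$ inside the relevant graded piece is a $1$-dimensional subrepresentation, hence a weight space. I would state this as a short lemma or cite it, then the rest is the linear-algebra bookkeeping of the previous paragraph, which is routine.
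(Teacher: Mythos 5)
Your proof is correct and takes essentially the same route as the paper: the paper substitutes the parametrization $u_i=\bt^{\mb{a}_i}f_i(x)$ into the irreducible implicit equation $p$ and observes that all the resulting $t$-monomials must coincide (otherwise a proper sub-polynomial of $p$ would already vanish on the hypersurface, contradicting that the ideal is principal and generated by $p$), which is exactly the semi-invariance condition $A\mathbf{b}=A\mathbf{b}'$ you extract from the torus action. Both arguments then conclude by the rank count $\dim\ker(A)=2$, so the exponent vectors lie in a $2$-dimensional affine plane.
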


\begin{proof}
Substituting variables $u_{0},\ldots,u_{n+1}$ by the parameterization $u_{i}=\bt^{\mb{a}_i}f_{i}(x)$ in the
implicit equation
$p(u_{0},\ldots,u_{n+1})$ of $Z_{A,{\bf f}}$, we get another polynomial in variables
$t_{1},t_{2},\ldots,t_{n},x$. The latter polynomial is the zero polynomial. After this substitution, each term
in $p$ becomes the product of a monomial in variables $t_{1},t_{2},\ldots,t_{n}$ and a polynomial in $x$.
Since $p$ is the generator of the principal ideal corresponding to the almost-toric hypersurface, all such
monomials in variables $t_{1},t_{2},\ldots,t_{n}$ are the same; otherwise the almost-toric hypersurface would
vanish on a polynomial that contains some terms of $p$, a contradiction. Suppose after substitution the
monomial is $ \prod_{i=1}^{n}{t_{i}^{\alpha_{i}}}$. If ${\bf v}=(v_{0},\ldots,v_{n+1})$ is a vertex of ${\rm
Newt}(Z_{A,{\bf f}}) $, then $p$ contains a term $\prod_{i=0}^{n+1}{u_{i}^{v_{i}}}$. Therefore $ {\bf v}\cdot
A^{T}=(\alpha_{1},\ldots,\alpha_{n})$. So vertices of ${\rm Newt}(Z_{A,{\bf f}}) $ satisfy $n$ independent
linear equations and we conclude that ${\rm Newt}(Z_{A,{\bf f}}) $ is at most $2$-dimensional.
\end{proof}

\begin{rem}
Even if $Z_{A,{\bf f}}$ is a hypersurface, its Newton polygon could be a degenerate
polygon that has only two vertices.
\end{rem}

\begin{lem}
Let $P_A$ be a Pl\"{u}cker matrix. Then $P_{A}$ is skew-symmetric. The rank of $P_{A}$ is $2$ and the entries
in each row and column of $P_{A}$ sum to $0$.
\end{lem}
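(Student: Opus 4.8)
The plan is to establish the three assertions in order; the only substantial one is the rank claim, which I would deduce from the fact that every row of $P_A$ lies in $\ker A:=\{\mathbf v\in\QQ^{n+2}:A\mathbf v=0\}$. Skew-symmetry is immediate from the definition: $p_{ij}=-p_{ji}$ and $p_{ii}=0$. For the rank, note that the columns of $A$ span $\ZZ^n$, so ${\rm rank}(A)=n$ and $\dim\ker A=2$; thus once I show the rows of $P_A$ lie in $\ker A$ I get ${\rm rank}(P_A)\le 2$. Moreover $A$ has an invertible $n\times n$ submatrix, so some $\det(A_{[i,j]})$ is nonzero and $P_A\neq 0$; a nonzero skew-symmetric (indeed alternating, since $p_{ii}=0$) matrix has even rank, hence ${\rm rank}(P_A)=2$, and in fact the row space of $P_A$ is all of $\ker A$.

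The heart of the argument is the identity $A\,P_A^{T}=0$, i.e. $\sum_{j=0}^{n+1}p_{kj}A_{rj}=0$ for every row index $k$ and every coordinate $r\in\{1,\dots,n\}$, where $A_{rj}$ is the entry of $A$ in row $r$ and column $\mathbf a_j$. To prove it I would form the $(n+2)\times(n+2)$ matrix $M$ whose first $n$ rows are the rows of $A$, whose next row is the $r$-th row of $A$ repeated, and whose last row is the standard basis vector $\mathbf e_k^{T}$. Since $M$ has two equal rows, $\det M=0$. I would then expand $\det M$ by the generalized Laplace rule along its last two rows: among all column pairs, only those containing the index $k$ contribute (the remaining entries of $\mathbf e_k$ vanish), the corresponding $2\times 2$ minor equals $\pm A_{rj}$ for the pair $\{k,j\}$, and the complementary $n\times n$ minor is $\det(A_{[k,j]})=\pm\delta\,p_{kj}$. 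Collecting terms — and using $p_{jk}=-p_{kj}$ to merge the cases $j<k$ and $j>k$ — one checks that every surviving term equals $\delta\,p_{kj}A_{rj}$, so $0=\delta\sum_{j}p_{kj}A_{rj}$, and since $\delta\neq 0$ the identity follows.

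Finally, for the row and column sums I would invoke the standing hypothesis that every column of $A$ sums to the same positive integer $d$, i.e. $\mathbf 1^{T}A=d\,\mathbf 1^{T}$. Then for each $k$,
\[
d\sum_{j}p_{kj}\;=\;\sum_{j}\Bigl(\sum_{r}A_{rj}\Bigr)p_{kj}\;=\;\sum_{r}\bigl(A\,P_A^{T}\bigr)_{rk}\;=\;0,
\]
so the $k$-th row of $P_A$ sums to zero because $d\neq 0$, and skew-symmetry gives the same for columns. (Equivalently, $\mathbf 1$ lies in the row space of $A$, which is the orthogonal complement of $\ker A$, in which the rows of $P_A$ sit.)

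The one place that demands care rather than ideas is the sign bookkeeping in the generalized Laplace expansion of $\det M$: one must reconcile the $(-1)^{i+j}$ convention in the definition of $p_{ij}$, the Laplace signs, and the sign from the repeated-row trick. I would handle this by fixing the column order $0,1,\dots,n+1$ throughout and treating the cases $j<k$ and $j>k$ separately, as above; everything else is routine linear algebra once $A\,P_A^{T}=0$ is in hand.
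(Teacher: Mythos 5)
Your proof is correct and essentially mirrors the paper's: both rest on the repeated-row determinant trick, which shows that every row of $P_A$ is orthogonal to every row of $A$, so ${\rm row}(P_A)\subseteq\ker(A)$ and ${\rm rank}(P_A)\le 2$. The only differences are cosmetic — the paper gets the rank lower bound by exhibiting the nonzero $2\times 2$ minor on rows and columns $\{i,j\}$ for some $p_{ij}\neq 0$ rather than invoking the even-rank property of alternating matrices, and for the zero row sums the paper observes directly that $\sum_j p_{ij}$ is, up to sign, the determinant of $A_{[i]}$ with the all-ones row appended (which vanishes since $\mathbf{1}$ lies in ${\rm row}(A)$), whereas your derivation from $\mathbf{1}^T A=d\,\mathbf{1}^T$ together with $A P_A^T=0$ is, as you yourself note, the same fact in algebraic rather than determinantal clothing.
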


\begin{proof}
The first claim follows directly from the definition of $P_{A}$. For the second claim, note that the inner
product of the $i$-th row of $P_{A}$ and the $j$-th row of $A$ is the determinant of the following $(n+1)\times
(n+1)$ matrix up to sign: append the $j$-th row of $A$ to matrix $A$ and then delete the $i$-th column of the new
matrix. Therefore ${\rm row}(P_{A})\subseteq {\rm row}(A)^{\perp}$, which means the rank of $P_{A}$ is at most
$2$. Since $A$ has full rank, by the definition of $P_{A}$ there is a nonzero non-diagonal entry $p_{ij}$ of
$P_{A}$. By the first claim, $p_{ji}$ is nonzero too. So we get a nonzero $2\times 2$ minor in $P_{A}$. For
the last claim, since $P_{A}$ is
skew-symmetric it is enough to prove the claim for rows. It turns out that up to sign, the sum of entries in the
$i$-th row is the determinant of the matrix formed by $A_{[i]}$ and the vector ${\bf 1}=(1,1,\ldots,1)\in
K^{n+1}$, where $A_{[i]}$ is the matrix obtained from $A$ by deleting the $i$-th column. Since each column of
$A$ has sum $d$, the vector ${\bf 1}$ lies in the row space of $A_{[i]}$, so the matrix is singular and its
determinant
is zero.
\end{proof}

Next, we explore $Z_{A,{\bf f}}$. We consider its \emph{tropicalization} (we follow the definition from
\cite[Definition 3.2.1]{MS14}). Let $X_{A},Y_{\bf f},Z_{A,{\bf f}}$ be defined as in Section \ref{s:intro}.
Note that $Z_{A,{\bf f}}$ is the Hadamard product $\overline{X_{A}*Y_{\bf f}}$ of the varieties $X_{A}$ and
$Y_{\bf f}$. We have the following result relating the tropicalizations of $X_{A},Y_{\bf
f},Z_{A,{\bf f}}$:
\begin{prop}\cite[Corollary 3.3.6]{Cue10}\label{prop:ha}
\[
{\rm trop}(Z_{A,{\bf f}})={\rm trop}(X_{A})+{\rm trop}(Y_{\bf f}),
\]
where the sum is Minkowski sum.
\end{prop}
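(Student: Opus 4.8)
The plan is to recall the argument behind this cited statement, which rests on one idea: the Hadamard product $Z_{A,{\bf f}}$ is the image of a product variety under a homomorphism of tori, so the identity reduces to the functoriality of tropicalization along torus homomorphisms. Write $T=(K^{*})^{n+2}$ for the dense torus of $\PP^{n+1}$ and set $X_{A}^{\circ}=X_{A}\cap T$, $Y_{\bf f}^{\circ}=Y_{\bf f}\cap T$; both are nonempty, the first because $X_{A}$ is the closure of a subtorus image and the second because none of the $f_{i}$ is identically zero. Coordinatewise multiplication
\[
\mu\colon T\times T\longrightarrow T,\qquad(\mb{a},\mb{b})\longmapsto(a_{0}b_{0},a_{1}b_{1},\ldots,a_{n+1}b_{n+1}),
\]
is a homomorphism of algebraic tori, and by construction $Z_{A,{\bf f}}\cap T$ is the Zariski closure in $T$ of $\mu(X_{A}^{\circ}\times Y_{\bf f}^{\circ})$; closure in $T$ and closure in $\PP^{n+1}$ intersected with $T$ coincide here. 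The homomorphism on cocharacter lattices induced by $\mu$ is addition, with real extension $M\colon\RR^{n+2}\times\RR^{n+2}\to\RR^{n+2}$, $(v,w)\mapsto v+w$.

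Next I would use two standard facts about tropicalization (see \cite[Chapter 3]{MS14}), after base-changing $K$ to an algebraically closed field carrying a nontrivial valuation, which changes none of the tropical sets in question. First, tropicalization commutes with products: ${\rm trop}(X_{A}^{\circ}\times Y_{\bf f}^{\circ})={\rm trop}(X_{A})\times{\rm trop}(Y_{\bf f})$. Second, for a torus homomorphism $\alpha$ with induced linear map $A_{\alpha}$ on tropicalizations and a subvariety $X$ of the source, ${\rm trop}(\overline{\alpha(X)})$ is the topological closure of $A_{\alpha}({\rm trop}(X))$. Applying the second fact with $\alpha=\mu$, $A_{\alpha}=M$, $X=X_{A}^{\circ}\times Y_{\bf f}^{\circ}$, and then the first fact, gives
\[
{\rm trop}(Z_{A,{\bf f}})=\overline{M\bigl({\rm trop}(X_{A})\times{\rm trop}(Y_{\bf f})\bigr)}=\overline{{\rm trop}(X_{A})+{\rm trop}(Y_{\bf f})}.
\]
The closure is then removable: ${\rm trop}(X_{A})$ and ${\rm trop}(Y_{\bf f})$ are finite unions of rational polyhedra, so their Minkowski sum is again a finite union of polyhedra, hence already closed. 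This yields the claimed equality.

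The one genuinely delicate ingredient is the second fact, the compatibility of tropicalization with push-forward along a torus homomorphism that need not be finite: here $\mu$ collapses the antidiagonal subtorus, so $\overline{\mu(X_{A}^{\circ}\times Y_{\bf f}^{\circ})}$ may have strictly smaller dimension than $X_{A}^{\circ}\times Y_{\bf f}^{\circ}$, and the Minkowski sum can likewise drop dimension. The inclusion $M({\rm trop}(X_{A})\times{\rm trop}(Y_{\bf f}))\subseteq{\rm trop}(Z_{A,{\bf f}})$ is the easy half: if points of $X_{A}^{\circ}$ and $Y_{\bf f}^{\circ}$ have valuation vectors $w$ and $w'$, their coordinatewise product lies on $Z_{A,{\bf f}}$ with valuation vector $w+w'$, and since ${\rm trop}(Z_{A,{\bf f}})$ is closed this already gives one containment. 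The reverse containment rests on the density of $\mu(X_{A}^{\circ}\times Y_{\bf f}^{\circ})$ in $Z_{A,{\bf f}}\cap T$ together with the continuity of the valuation map (equivalently, the structure theorem of tropical geometry), so that every point of ${\rm trop}(Z_{A,{\bf f}})$ is a limit of points of $M({\rm trop}(X_{A})\times{\rm trop}(Y_{\bf f}))$; packaging this is precisely what \cite[Corollary 3.3.6]{Cue10} does.
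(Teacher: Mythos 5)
The paper does not prove this proposition; it cites \cite[Corollary 3.3.6]{Cue10} and uses it as a black box. Your reconstruction of the underlying argument is correct and is indeed the standard one: express the Hadamard product as the closure of the image of $X_A^\circ\times Y_{\bf f}^\circ$ under the multiplication homomorphism $\mu$, invoke the compatibility of tropicalization with products, and then apply the Sturmfels--Tevelev push-forward theorem for monomial maps (\cite{ST08}, also \cite[Theorem 3.3.5]{MS14}), which is exactly what Cueto's corollary packages. One small simplification: the push-forward theorem already gives ${\rm trop}(\overline{\alpha(X)})=A_\alpha({\rm trop}(X))$ with no closure on the right (the image of a finite union of rational polyhedra under a linear map is automatically a finite union of polyhedra, hence closed), so your step showing the closure is removable, while correct, is not needed if the theorem is quoted in its usual form.
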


So in order to find $\text{trop}(Z_{A,{\bf f}})$, it is enough to find
$\text{trop}(X_{A})$ and $\text{trop}(Y_{\bf f})$.
\begin{lem}\label{lem:X}
If ${\rm row}(A)$ is the row space of $A$ with real coefficients, then
\[
{\rm trop}(X_{A})={\rm row}(A).
\]
\end{lem}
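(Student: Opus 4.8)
The plan is to reduce the statement to the tropicalization of the dense torus of $X_A$ and then to compute the latter directly from its (binomial) defining equations. Throughout I would regard $X_A$ through its affine cone $\widehat{X}_A \subseteq \mathbb{A}^{n+2}$, so that ${\rm trop}(X_A)$ is the fan in $\RR^{n+2}$ spanned by the valuations of points of $\widehat{X}_A$; this is the convention under which Proposition~\ref{prop:ha} and Theorem~\ref{thm:main} are phrased.

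\emph{Step 1: pass to the torus.} Since $X_A$ is by definition the closure of the image of the monomial map $\Phi_A$, and the image of a homomorphism of algebraic tori is a closed subtorus, the intersection $\widehat{X}_A \cap (K^*)^{n+2}$ is exactly the subtorus $T_A := \Phi_A\bigl((K^*)^n\bigr)$; indeed $\widehat X_A = \overline{T_A}$ and $T_A$ is closed in $(K^*)^{n+2}$. Because the tropicalization of a subvariety of $\mathbb{A}^{n+2}$ depends only on its intersection with the dense torus, ${\rm trop}(X_A) = {\rm trop}(T_A)$.

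\emph{Step 2: equations and tropicalization.} Let $L = \ker(A) \cap \ZZ^{n+2}$ be the rank-$2$ lattice of integer relations among the columns of $A$ (by the remark in Section~\ref{s:intro}, $L$ is the row lattice of $P_A$). In the Laurent ring $K[y_0^{\pm},\dots,y_{n+1}^{\pm}]$ the ideal of $T_A$ is the lattice ideal $I_L = \langle\, y^{\mathbf m} - 1 : \mathbf m \in L\,\rangle$. For $w \in \RR^{n+2}$: if $\langle w,\mathbf m\rangle = 0$ for all $\mathbf m\in L$, then every generator $y^{\mathbf m}-1$ is $w$-homogeneous, hence $I_L$ is $w$-homogeneous, ${\rm in}_w(I_L)=I_L$ is proper, and $w\in{\rm trop}(T_A)$; if instead $\langle w,\mathbf m\rangle\neq 0$ for some $\mathbf m\in L$, then ${\rm in}_w(y^{\mathbf m}-1)$ is the constant $-1$, so $1\in{\rm in}_w(I_L)$ and $w\notin{\rm trop}(T_A)$. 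By the Fundamental Theorem of tropical geometry \cite[Theorem 3.2.3]{MS14} this gives ${\rm trop}(T_A) = L^{\perp} = (\ker_{\RR}A)^{\perp} = {\rm row}(A)$, which is the assertion. Equivalently, one may invoke functoriality of tropicalization under the monomial map $\Phi_A$, whose tropicalization is the linear map $w\mapsto A^{T}w$, so that ${\rm trop}(X_A)=A^{T}(\RR^{n})={\rm row}(A)$.

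The step that genuinely needs care is not a hard calculation but the set-up: correctly identifying the ideal of the subtorus $T_A$ with the lattice ideal $I_L$ (so that the Fundamental Theorem applies cleanly in the Laurent ring), fixing once and for all the convention for the tropicalization of a projective variety as a fan in $\RR^{n+2}$, and observing that $L^{\perp}$ computed over $\QQ$ or over $\RR$ yields the same subspace ${\rm row}(A)$, since the rational points are dense in the closed set ${\rm row}_{\RR}(A)$. Everything else is bookkeeping.
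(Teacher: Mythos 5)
Your proof is correct, but it takes a genuinely different route from the paper. The paper's proof first invokes Proposition~\ref{prop:ha} to write $X_A$ as an iterated Hadamard product of the rank-one toric varieties attached to the individual rows of $A$, so that ${\rm trop}(X_A)$ becomes a Minkowski sum of lines; it then handles the rank-one case by quoting the binomial generating set from \cite[Corollary 4.3]{Stu96} and concluding that the tropicalization is the line $\RR\,\mathbf{a}$. You instead bypass the Hadamard/inductive reduction entirely: after passing to the affine cone you observe that $\widehat{X}_A \cap (K^*)^{n+2}$ is precisely the closed subtorus $T_A$ cut out by the lattice ideal $I_L$ with $L=\ker(A)\cap\ZZ^{n+2}$, and then read off ${\rm trop}(T_A)=L^{\perp}={\rm row}(A)$ from the initial-ideal criterion (or, in one line, from functoriality of tropicalization under the monomial map, which is just $A^T$). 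Both arguments ultimately rest on the binomial structure of the toric ideal, but yours is more direct, does not need Proposition~\ref{prop:ha} as a crutch, and makes the standard fact that ``tropicalizing a subtorus yields its cocharacter space tensored with $\RR$'' visibly the whole content of the lemma. One small point worth stating explicitly when invoking $\widehat{X}_A\cap(K^*)^{n+2}=T_A$: because the columns of $A$ all sum to $d$ and $K$ is algebraically closed, scaling a point of $T_A$ by $\lambda\in K^*$ is the same as precomposing with scaling $\mathbf{t}$ by a $d$-th root of $\lambda$, so the cone over the torus orbit is the torus orbit itself; otherwise one would need to enlarge $T_A$ by the extra $K^*$-factor from taking the cone.
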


\begin{proof}
By Proposition \ref{prop:ha}, it is enough to prove the case when $n=1$ and then use induction. For $n=1$ we
need to show that if $a_{0},a_{1},\ldots,a_{n+1}$ are integers and
\[
X_{A}=\text{cl}(\{(t^{a_{0}}:t^{a_{1}}:\ldots:t^{a_{n+1}})\in (\PP^{*})^{n+1}|t\in K^{*}\}),
\]
then
\[
\text{trop}(X_{A})=\{r\cdot{\bf a}|r\in \RR\},
\]
where ${\bf a}=(a_{0},a_{1},\ldots,a_{n+1})$. In this case the ideal $I(X_{A})$ is generated by binomials as
follows (cf. \cite[Corollary 4.3]{Stu96}):
\[
I(X_{A})=\langle {\bf x}^{\bf u}-{\bf x}^{\bf v}|{\bf a\cdot u=a\cdot v}\rangle.
\]
Then all points in $\text{trop}(X_{A})$ are scalar multiples of ${\bf a}$.
\end{proof}

\begin{lem}\label{lem:Y}
Let $Y_{\bf f}$ be defined as in (\ref{def:Y}), and $S=\{g_{1},\ldots,g_{m},\infty\}$. Then
\begin{equation}\label{eq:Y}
{\rm trop}(Y_{\bf f})=\bigcup_{z\in
S}{\{\lambda(\text{ord}_{z}{f_{0}},\text{ord}_{z}{f_{1}},\ldots,\text{ord}_{z}{f_{n+1}})\in \mathbb{R}^{n+2}
|\lambda \ge 0\}}.
\end{equation}
In addition, ${\rm trop}(Y_{\bf f})$ is an $1$-dimensional balanced polyhedral fan in $\RR^{n+2}$ and the rays
are spanned by the vectors ${\bf v}_{z}$, where
${\bf v}_{z}=(\text{ord}_{z}{f_{0}},\text{ord}_{z}{f_{1}},\ldots,\text{ord}_{z}{f_{n+1}})$.
\end{lem}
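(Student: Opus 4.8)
The plan is to compute $\mathrm{trop}(Y_{\bf f})$ directly from the parameterization, which is a small instance of tropical implicitization \cite{STY07, ST08, SY08}. Since $K$ carries the trivial valuation, the Fundamental Theorem of Tropical Geometry identifies $\mathrm{trop}(Y_{\bf f})$ with the closure of the set of coordinatewise valuations $w\bigl(\phi_{\bf f}(x)\bigr)$, where $\phi_{\bf f}(x)=(f_0(x):\cdots:f_{n+1}(x))$ and $x$ ranges over those elements of a nontrivially valued algebraically closed extension $\mathbb{K}\supseteq K$ (say the Puiseux series over $K$) for which all $f_i(x)$ are nonzero. The first step is the reduction: after dividing the $f_i$ by their common factor, which changes neither $Y_{\bf f}$ nor $Z_{A,{\bf f}}$, the map $\phi_{\bf f}$ extends to a morphism $\PP^1\to\PP^{n+1}$ with image $Y_{\bf f}$ (here one checks there is no common zero of the homogenized $f_i$, using $D=\max_i\deg f_i$ at the point $\infty$); hence $\PP^1\setminus\{\alpha_1,\dots,\alpha_m,\infty\}$ maps onto a cofinite, dense subset of $Y_{\bf f}$ meeting the torus, and the valuations of its points therefore recover all of $\mathrm{trop}(Y_{\bf f})$ after closure. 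Throughout I assume, as is implicit, that each $f_i$ is a nonzero polynomial, so that $f_i(x)=c_i\prod_{j=1}^m(x-\alpha_j)^{\mathrm{ord}_{g_j}f_i}$ with $c_i\in K^*$ and $g_j(x)=x-\alpha_j$, giving $\mathrm{val}(f_i(x))=\sum_{j=1}^m(\mathrm{ord}_{g_j}f_i)\,\mathrm{val}(x-\alpha_j)$.

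The combinatorial heart of the argument is a short case analysis on the numbers $\mathrm{val}(x-\alpha_j)$. Because the $\alpha_j$ are distinct elements of $K$, their pairwise differences have valuation $0$; consequently at most one index $j_0$ can have $\mathrm{val}(x-\alpha_{j_0})>0$, and this forces $\mathrm{val}(x)\ge 0$. So exactly one of three things occurs: (i) $\mathrm{val}(x)<0$, in which case $\mathrm{val}(x-\alpha_j)=\mathrm{val}(x)$ for every $j$, so $w(\phi_{\bf f}(x))=\mathrm{val}(x)\cdot(\deg f_0,\dots,\deg f_{n+1})$, a positive multiple of $-(\deg f_0,\dots,\deg f_{n+1})=\mathbf{v}_\infty$; (ii) $\mathrm{val}(x-\alpha_{j_0})=\mu>0$ for a unique $j_0$ while $\mathrm{val}(x-\alpha_j)=0$ for $j\ne j_0$, in which case $w(\phi_{\bf f}(x))=\mu\cdot(\mathrm{ord}_{g_{j_0}}f_0,\dots,\mathrm{ord}_{g_{j_0}}f_{n+1})=\mu\,\mathbf{v}_{g_{j_0}}$; (iii) otherwise $\mathrm{val}(x-\alpha_j)=0$ for all $j$ and $w(\phi_{\bf f}(x))=0$. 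Letting $\mu$ and $|\mathrm{val}(x)|$ range over the positive elements of the value group and then passing to the closure, one gets precisely the union of rays $\mathbb{R}_{\ge 0}\mathbf{v}_z$ over $z\in S$, which is formula (\ref{eq:Y}) (the point $\infty$ of $\PP^1$, being $K$-rational, contributes only the origin, and it is the limiting behaviour as $|\mathrm{val}(x)|\to\infty$ in case (i) that produces the ray $\mathbb{R}_{\ge 0}\mathbf{v}_\infty$). Along the way I record the identity $\sum_{j=1}^m\mathrm{ord}_{g_j}f_i=\deg f_i$, which makes $\mathbf{v}_\infty$ the honest order-of-vanishing vector at $\infty$ and ties the description to the definition of $V_{\bf f}$.

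For the structural claims: $Y_{\bf f}$ is irreducible of dimension $1$ (it is the closure of the image of the irreducible curve $\PP^1$, assuming $\phi_{\bf f}$ nonconstant), so $\mathrm{trop}(Y_{\bf f})$ is pure $1$-dimensional; from (\ref{eq:Y}) it is a fan whose rays are spanned by the $\mathbf{v}_z$. Balancedness follows either from the Structure Theorem \cite[Theorem 3.3.5]{MS14}, or directly: assigning to each ray the sum of the lattice lengths of the $\mathbf{v}_z$ lying on it, the balancing condition at the origin reads $\sum_{z\in S}\mathbf{v}_z=0$, which holds because for each $i$ the rational function $f_i$ on $\PP^1$ has principal divisor $\sum_{j}(\mathrm{ord}_{g_j}f_i)[\alpha_j]-(\deg f_i)[\infty]$ of degree zero. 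The step I expect to demand the most care is the reduction in the first paragraph: passing to a nontrivially valued extension, confirming that the parameterized points together with their closure exhaust $\mathrm{trop}(Y_{\bf f})$, and handling the point at infinity so that its ray is neither omitted nor counted twice.
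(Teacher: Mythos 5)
Your proof is essentially the paper's: both pass to a nontrivially valued algebraically closed extension (Puiseux series), invoke the Fundamental Theorem of Tropical Geometry to reduce to computing $\overline{\{(\mathrm{val}\,f_0(x),\dots,\mathrm{val}\,f_{n+1}(x))\}}$, and then do the same ultrametric case analysis, namely that at most one $\mathrm{val}(x-\alpha_j)$ can be positive while $\mathrm{val}(x)<0$ forces $\mathrm{val}(x-\alpha_j)=\mathrm{val}(x)$ for all $j$. Your additions (clearing common factors, unifying the two inclusions into one case split, and the direct divisor-degree argument for balancing in place of the citation) are tidy refinements but do not change the route.
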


\begin{proof}
Let $K'=K\{\!\{t\}\!\}$ be the field of \emph{Puiseux series}\cite[Example 2.1.3]{MS14} in variable $t$ with coefficients in $K$. Then $K'$ has a nontrivial valuation and is algebraically closed. Let $Y'_{\bf f}$ be a variety parameterized as in (\ref{def:Y}) but $x$ varies in $K'$ instead. Note that $Y_{\bf f}$ and $Y'_{\bf f}$ have the same ideal, so $\text{trop}(Y_{\bf f})=\text{trop}(Y'_{\bf f})$.

By \cite[Theorem 3.2.3]{MS14}, $\text{trop}(Y'_{\bf f})=\overline{\text{val}(Y'_{\bf f})}$, where
\[
\text{val}(Y'_{\bf f})=\{(\text{val}(f_0(x)),\ldots,\text{val}(f_{n+1}(x)))|x\in K'\}.
\]
Since $\QQ$ is dense in $\RR$, the right hand side of (\ref{eq:Y}) is the closure of
\[
B=\bigcup_{z\in S}{\{\lambda(\text{ord}_{z}{f_{0}},\text{ord}_{z}{f_{1}},\ldots,\text{ord}_{z}{f_{n+1}})\in
\mathbb{R}^{n+2} |\lambda\in \mathbb{Q^{+}}\}}.
\]
It then suffices to show $B=\text{val}(Y'_{\bf f})$. We first show that $B\subseteq \text{val}(Y'_{\bf f})$. Fixing $z\in S$ and $\lambda>0$, we get a vector
\[{\bf u}=\lambda(\text{ord}_{z}{f_{0}},\text{ord}_{z}{f_{1}},\ldots,\text{ord}_{z}{f_{n+1}})\in B.\]
\begin{itemize}
  \item If $z\ne \infty$, then $z=g_{j}$ for some $1\le j\le m$. Since each $g_{j}$ is linear, we may assume $g_{j}(x)=x-r_j$ where $r_j\in K$. For each $i$, we have $f_{i}(x)=(x-r_{j})^{\text{ord}_{g_{j}}{f_{i}}}h_{i}(x)$, where $h_{i}(r_j)\ne 0$. Then $\text{val}(h_{i}(r_j))=0$. Now if we take $x=r_{j}+t^{\lambda}\in K'$, then $f_{i}(x)=t^{\lambda\text{ord}_{g_{j}}{f_{i}}}h_{i}(r_{j}+t^{\lambda})$. Notice that $h_{i}(x)$ is a
  polynomial in $K[x]$. Then $t^{\lambda}$ divides
  $h_{i}(r_{j}+t^{\lambda})-h_{i}(r_j)$ in $K'$, so $\text{val}(h_{i}(r_{j}+t^{\lambda})-h_{i}(r_j))>0$. Then
  $\text{val}(h_{i}(r_{j}+t^{\lambda}))=0$. Thus $\text{val}(f_{i}(x))=\lambda\text{ord}_{g_{j}}{f_{i}}$, which means
  ${\bf u}\in \text{val}(Y'_{\bf f})$.

  \item If $z=\infty$, then $\text{ord}_{z}{f_{i}}=-\deg(f_{i})$. We take $x=t^{-\lambda}\in K'$. Then among all
      terms in $f_{i}(x)$, the term with smallest valuations is the leading term, because $\lambda>0$. Then
  \[
  \text{val}(f_{i}(x))=(-\lambda)\deg(f_{i})=\lambda\text{ord}_{\infty}{f_{i}}.
  \]
  So ${\bf u}\in \text{val}(Y'_{\bf f})$, too.
\end{itemize}

We next show that $\text{val}(Y'_{\bf f}) \subseteq B$. Suppose ${\bf u}\in \text{val}(Y'_{\bf f})$. Then there exists
$x_0\in K'$ such that ${\bf u}=(\text{val}(f_{0}(x_0)),\ldots,\text{val}(f_{n+1}(x_0)))$. We may assume that ${\bf u}\ne 0$. We must deal with two cases.
\begin{itemize}
  \item All terms of $x_0$ are nonnegative powers of $t$. Since
      $f_{i}(x)=\prod_{j=1}^{m}{g_{j}(x)^{\text{ord}_{g_{j}}{f_{i}}}}$, we have
  \[
  \text{val}(f_{i}(x_0))=\sum_{j=1}^{m}{\text{ord}_{g_{j}}{f_{i}}\cdot \text{val}(g_{j}(x_0))}.
  \]
      Note that all $g_j$ are linear functions, then for $1\le j<j'\le m$ we have $\text{val}(g_{j}(x_0)-g_{j'}(x_0))=0$. Hence for $1\le j\le m$ there is at most one nonzero $\text{val}(g_{j}(x_0))$, while they are not all zero because ${\bf u}\ne 0$. Suppose $\text{val}(g_{j}(x_0))=\lambda>0$. Then ${\bf u}=\lambda(\text{ord}_{z}{f_{0}},\ldots,\text{ord}_{z}{f_{n+1}})$ where $z=g_j\in S$ and ${\bf u}\in B$.
  \item At least one term of $x_0$ is a negative power of $t$. Suppose in $x_0$ the term with least degree is
  $ct^{-\frac{p}{q}}$, where $\frac{p}{q}\in \mathbb{Q}_{\ge 0}$. Then ${\bf u}=\frac{p}{q}(\text{ord}_{\infty}{f_{0}},\text{ord}_{\infty}{f_{1}},\ldots,\text{ord}_{\infty}{f_{n+1}})$
  is in $B$, too.
\end{itemize}

Finally, by \cite[Proposition 3.4.13]{MS14}, $\text{trop}(Y'_{\bf f})$ is $1$-dimensional and balanced.
\end{proof}

By the definition of $V_{\bf f}$, the tropicalization $\text{trop}(Y_{\bf f})$ is exactly the union of rays
generated by column vectors in $V_{\bf f}$. This implies the following corollary:
\begin{cor}\label{cor:trop} The tropicalization of $Z_{A,{\bf f}}$ is
\[
\{\mb{u}+\lambda\cdot \mb{v}^{T}|\mb{u}\in {\rm row}(A),\mb{v}\text{ is a column vector
of }V_{\bf f},\lambda\ge 0\}.
\]
\end{cor}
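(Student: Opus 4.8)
The plan is to read this corollary off from the three results already established: the Minkowski-sum formula for the tropicalization of a Hadamard product (Proposition~\ref{prop:ha}), the identity $\text{trop}(X_A)=\text{row}(A)$ (Lemma~\ref{lem:X}), and the ray description of $\text{trop}(Y_{\bf f})$ (Lemma~\ref{lem:Y}). Chaining these together gives
\[
\text{trop}(Z_{A,{\bf f}}) \;=\; \text{trop}(X_A)+\text{trop}(Y_{\bf f}) \;=\; \text{row}(A) \;+\; \bigcup_{z\in S}\{\lambda\,\mathbf{v}_z : \lambda\ge 0\},
\]
where ``$+$'' is Minkowski sum. Since the Minkowski sum of a linear subspace with a union of rays is $\{\mathbf{u}+\lambda\,\mathbf{v}_z : \mathbf{u}\in\text{row}(A),\ z\in S,\ \lambda\ge 0\}$, the only remaining task is to check that the family of rays $\{\RR_{\ge 0}\mathbf{v}_z : z\in S\}$ coincides with the family of rays spanned by the columns of $V_{\bf f}$.

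First I would treat the finite places. Each vector $\mathbf{u}_j=(\text{ord}_{g_j}f_0,\ldots,\text{ord}_{g_j}f_{n+1})$ lies in the nonnegative orthant and is nonzero, because $g_j$ is a linear factor of $f_0\cdots f_{n+1}$ and hence divides at least one $f_i$. Consequently two linearly dependent $\mathbf{u}_j$'s are positive multiples of one another, so they span the same ray, and so does their sum. Using that $\RR_{\ge 0}\mathbf{u}\cup\RR_{\ge 0}\mathbf{u}'=\RR_{\ge 0}(\mathbf{u}+\mathbf{u}')$ whenever $\mathbf{u},\mathbf{u}'$ lie on a common ray, each aggregation step defining $S'$ leaves the union $\bigcup_j\RR_{\ge 0}\mathbf{u}_j$ unchanged, so $\bigcup_{j=1}^{m}\RR_{\ge 0}\mathbf{u}_j=\bigcup_{j=1}^{\ell}\RR_{\ge 0}\mathbf{v}_j$; that is, the finite-place rays are exactly the rays spanned by the first $\ell$ columns of $V_{\bf f}$.

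Next I would handle the place $z=\infty$. By Lemma~\ref{lem:Y} we have $\text{ord}_\infty f_i=-\deg f_i$, and since $g_1,\ldots,g_m$ are the distinct linear factors of $f_0\cdots f_{n+1}$ we have $\deg f_i=\sum_{j=1}^{m}\text{ord}_{g_j}f_i$; hence $\mathbf{v}_\infty=-\sum_{j=1}^{m}\mathbf{u}_j=-\sum_{j=1}^{\ell}\mathbf{v}_j$, which is precisely the last column of $V_{\bf f}$. Combining the two steps, $\bigcup_{z\in S}\RR_{\ge 0}\mathbf{v}_z$ is the union of the rays spanned by all columns of $V_{\bf f}$; substituting this into the displayed equality yields the asserted description of $\text{trop}(Z_{A,{\bf f}})$, where $\mathbf{v}^T$ denotes the (row) vector obtained from a column $\mathbf{v}$ of $V_{\bf f}$.

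I do not expect a genuine obstacle: the mathematical weight is carried entirely by Proposition~\ref{prop:ha} and Lemmas~\ref{lem:X} and \ref{lem:Y}, and what remains is bookkeeping. The one point that really uses the hypotheses is the aggregation step: it matters that the finite-place vectors $\mathbf{u}_j$ lie in the nonnegative orthant, so that ``linearly dependent'' means ``on the same ray'' rather than merely ``on the same line'', which is what makes collapsing them to their sum harmless for the union of rays, and hence for $\text{trop}(Y_{\bf f})$ viewed as a fan.
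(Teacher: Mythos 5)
Your proof is correct and follows the same route as the paper: it chains Proposition~\ref{prop:ha} with Lemmas~\ref{lem:X} and~\ref{lem:Y}. The paper compresses the identification of $\text{trop}(Y_{\bf f})$ with the union of rays over the columns of $V_{\bf f}$ into a single sentence preceding the corollary; you correctly supply the two points it leaves implicit, namely that aggregating linearly dependent vectors in the nonnegative orthant does not change the union of rays, and that $\mathbf{v}_{\infty}=-\sum_{j}\mathbf{v}_{j}$ is exactly the last column of $V_{\bf f}$.
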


We now find ${\rm Newt}(Z_{A,{\bf f}}) $.
\begin{prop}\label{prop:direc}
The edges of ${\rm Newt}(Z_{A,{\bf f}}) $ are parallel to the nonzero column vectors in $P_{A}\cdot V_{\bf
f}$.
\end{prop}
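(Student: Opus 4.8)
The plan is to pass from the tropicalization $\trop(Z_{A,{\bf f}})$, described in Corollary~\ref{cor:trop}, to the Newton polygon $\Newt(Z_{A,{\bf f}})$ via the standard duality between the tropical hypersurface of a polynomial and its Newton polytope. Recall that for a hypersurface in $\PP^{n+1}$ cut out by an irreducible polynomial $p$, the tropical variety $\trop(Z_{A,{\bf f}})$ is the codimension-one skeleton of the normal fan of $\Newt(p)$: each facet (here, $n$-dimensional cone) of $\trop(Z_{A,{\bf f}})$ is the normal cone to an edge of $\Newt(p)$, and the primitive direction of that edge is orthogonal to the facet's lineality-modulo-span data. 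So the first step is to read off the maximal cones of $\trop(Z_{A,{\bf f}})$ from Corollary~\ref{cor:trop}: the tropical variety is the union, over the columns ${\bf v}$ of $V_{\bf f}$, of the half-spaces $\mathrm{row}(A)+\RR_{\ge 0}{\bf v}^{T}$. Because $\mathrm{row}(A)$ has dimension $n$ and is common to all pieces, the distinct maximal cones of $\trop(Z_{A,{\bf f}})$ are exactly these $(n+1)$-dimensional cones $C_{\bf v} = \mathrm{row}(A) + \RR_{\ge 0}{\bf v}^{T}$ (after merging any that coincide, i.e. columns ${\bf v}$ that are congruent mod $\mathrm{row}(A)$), and their shared lineality space is $\mathrm{row}(A)$ itself.

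Next I would identify the edge directions of $\Newt(Z_{A,{\bf f}})$ by dualizing. An edge $e$ of $\Newt(p)$ corresponds to a maximal cone $\sigma_e$ of $\trop(p)$, and the direction vector of $e$ spans the orthogonal complement (inside the plane containing $\Newt(p)$) of the linear span of $\sigma_e$. By Lemma~\ref{lem:gon} the Newton polygon lies in the affine plane $\{{\bf w}: {\bf w}\cdot A^{T} = (\alpha_1,\dots,\alpha_n)\}$, whose direction space is $\ker(A^{T})=\mathrm{row}(A)^{\perp}$; and the edge directions all lie in the further translate of this plane by the all-ones relation, i.e. in $\mathrm{row}(A)^{\perp}\cap \mathbf{1}^{\perp}$, a $2$-dimensional space. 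Dually, the maximal cone $C_{\bf v}$ has linear span $\mathrm{row}(A)+\RR{\bf v}^{T}$, so the corresponding edge of $\Newt(Z_{A,{\bf f}})$ is spanned by any vector in $\mathrm{row}(A)^{\perp}\cap\mathbf{1}^{\perp}$ that is also orthogonal to ${\bf v}$. The claim is then precisely that $P_A{\bf v}$ is such a vector: we know $\mathrm{row}(P_A)=\mathrm{row}(A)^{\perp}$ and (from the previous lemma) that all columns and rows of $P_A$ sum to zero, so $P_A{\bf v}\in\mathrm{row}(A)^{\perp}\cap\mathbf{1}^{\perp}$; and since $P_A$ is skew-symmetric, ${\bf v}^{T}(P_A{\bf v})=0$, giving orthogonality to ${\bf v}$. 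As $\mathrm{row}(A)^{\perp}\cap\mathbf{1}^{\perp}$ is $2$-dimensional and $\mathrm{row}(A)^{\perp}\cap{\bf v}^{\perp}\cap\mathbf{1}^{\perp}$ is at most $1$-dimensional when ${\bf v}\notin\mathrm{row}(A)$, the vector $P_A{\bf v}$ (if nonzero) spans exactly the required line, so it is parallel to the edge dual to $C_{\bf v}$.

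Finally I would verify that each nonzero column $P_A{\bf v}$ genuinely corresponds to an edge, i.e. that the cone $C_{\bf v}$ really is a maximal cone of $\trop(Z_{A,{\bf f}})$ of the right multiplicity and not, say, a lower-dimensional face or an artifact of an over-counted column. This is where I expect the main subtlety: one must check that $P_A{\bf v}=0$ exactly when ${\bf v}\in\mathrm{row}(A)$ (so the column contributes nothing, consistent with $C_{\bf v}$ collapsing into the lineality space), and that distinct edges of the polygon match distinct rays $\RR_{\ge0}{\bf v}^{T}$ modulo $\mathrm{row}(A)$. Since $\mathrm{row}(P_A)=\ker(A)$, we have $P_A{\bf v}=0\iff {\bf v}\in(\ker A)^{\perp}=\mathrm{row}(A)$, handling the first point; and if $P_A{\bf v}$ and $P_A{\bf v}'$ are parallel nonzero vectors then, since both lie in the plane $\mathrm{row}(A)^{\perp}\cap\mathbf{1}^{\perp}$ and are orthogonal to ${\bf v}$ and ${\bf v}'$ respectively, one deduces ${\bf v}\equiv c{\bf v}'\pmod{\mathrm{row}(A)}$, so the rays coincide. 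This establishes the bijection between nonzero columns of $P_A\cdot V_{\bf f}$ and edges of $\Newt(Z_{A,{\bf f}})$, proving the proposition; the precise edge \emph{lengths} (and the fact that the edge vectors, not merely their directions, are given by the columns) is the content of Theorem~\ref{thm:main} and will be pinned down separately using the balancing condition and the multiplicities on $\trop(Y_{\bf f})$.
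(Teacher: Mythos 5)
Your proposal is correct and follows essentially the same route as the paper's proof: both identify ${\rm trop}(Z_{A,{\bf f}})$ as the $(n+1)$-skeleton of the normal fan of ${\rm Newt}(Z_{A,{\bf f}})$ via \cite[Proposition 3.1.10]{MS14}, describe its maximal cones as ${\rm row}(A)+\RR_{\ge 0}\mathbf{v}^{T}$ from Corollary \ref{cor:trop}, and then combine ${\rm col}(P_A)=\ker(A)={\rm row}(A)^{\perp}$ with the skew-symmetry identity $\mathbf{v}^{T}P_A\mathbf{v}=0$ to conclude that $P_A\mathbf{v}$ (when nonzero) spans the one-dimensional line of edge directions dual to that cone. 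Your closing paragraph checking that $P_A\mathbf{v}=0$ if and only if $\mathbf{v}\in{\rm row}(A)$, and that distinct edges correspond to distinct rays, is a welcome tightening of a point the paper leaves implicit, but it stays within the same framework rather than constituting a different argument.
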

\begin{proof}
By \cite[Proposition 3.1.10]{MS14}, $\text{trop}(Z_{A,{\bf f}})$ is the support of an $(n+1)$-dimensional
polyhedral fan, which is the \emph{$(n+1)$-skeleton} of the normal fan of ${\rm Newt}(Z_{A,{\bf f}}) $. Since
${\rm Newt}(Z_{A,{\bf f}})$ is a polygon, every cone in the $(n+1)$-skeleton of its normal fan is a cone
spanned by ${\rm row}(A)$ (which is exactly the orthogonal complement of the plane that contains the polygon)
and a ray inside this plane that is orthogonal to the corresponding edge of the polygon. By Corollary
\ref{cor:trop}, every one of these directed edges belongs to $\ker(A)$ and is orthogonal to the corresponding
column vectors of $V_{\bf f}$. Let ${\bf v}$ be a column vector of $V_{\bf f}$. Since $P_{A}$ is
skew-symmetric, we have
\[
{\bf v}^T \cdot P_{A} \cdot {\bf v}=0.
\]
Hence $P_{A}\cdot {\bf v}$ is orthogonal to ${\bf v}^T$, which means that there exists a scalar $c$ such that
$c(P_{A}\cdot {\bf v})$ represents an edge of ${\rm Newt}(Z_{A,{\bf f}})$.
\end{proof}

It remains to show that the length of column vectors in $P_A \cdot V_{\bf f}$ coincide with the edges of
${\rm Newt}(Z_{A,{\bf f}}) $. To analyze these lengths we recall the notion of \emph{multiplicity} of a
polyhedron which is maximal in a weighted polyhedral complex. We adopt the definition in \cite[Definition
3.4.3]{MS14}.
We have the following result:

\begin{lem}\cite[Lemma 3.4.6]{MS14}\label{lem:length}
The lattice length of any edge (defined as the number of lattice points on the edge minus $1$) of
${\rm Newt}(Z_{A,{\bf f}}) $ is the multiplicity of the corresponding
$(n+1)$-dimensional polyhedron in the normal fan of ${\rm Newt}(Z_{A,{\bf f}}) $.
\end{lem}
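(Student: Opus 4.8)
The plan is to recognise this lemma as the hypersurface case of a general principle and to supply the short argument. We already know, from the proof of Proposition \ref{prop:direc} (which rests on \cite[Proposition 3.1.10]{MS14}), that ${\rm trop}(Z_{A,{\bf f}})$ is the $(n+1)$-skeleton of the normal fan $\calN({\rm Newt}(Z_{A,{\bf f}}))$, so its maximal cones are exactly the cones $\sigma_e$ dual to the edges $e$ of the polygon ${\rm Newt}(Z_{A,{\bf f}})$ of Lemma \ref{lem:gon}; concretely $\sigma_e$ consists of those ${\bf w}$ for which $\langle{\bf w},{\bf a}\rangle$ is minimised over ${\rm Newt}(Z_{A,{\bf f}})$ precisely along $e$. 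What has to be shown is that the tropical weight ${\rm mult}(\sigma_e)$ attached by \cite[Definition 3.4.3]{MS14} equals the lattice length of $e$. Since $Z_{A,{\bf f}}$ is a hypersurface, $I(Z_{A,{\bf f}})=\langle p\rangle$ for an irreducible $p\in K[u_0,\ldots,u_{n+1}]$ with ${\rm Newt}(p)={\rm Newt}(Z_{A,{\bf f}})$, and I would compute ${\rm mult}(\sigma_e)$ from the initial ideal of $\langle p\rangle$ at a generic weight vector of $\sigma_e$.

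The computation I have in mind is this. Fix an edge $e$ with endpoints the vertices ${\bf a},{\bf b}$ of ${\rm Newt}(p)$, let $\ell$ be its lattice length and ${\bf e}=({\bf b}-{\bf a})/\ell\in\ZZ^{n+2}$ the primitive lattice vector along $e$, and pick ${\bf w}$ in the relative interior of $\sigma_e$. Then ${\rm in}_{\bf w}(p)$ is the subsum of $p$ over the lattice points on $e$, and ${\rm in}_{\bf w}(\langle p\rangle)=\langle{\rm in}_{\bf w}(p)\rangle$ because the ideal is principal. Dividing by the monomial ${\bf u}^{\bf a}$ I would write ${\rm in}_{\bf w}(p)={\bf u}^{\bf a}\,q({\bf u}^{\bf e})$ with $q\in K[y]$ of degree $\ell$ and with nonzero constant and leading coefficients (the endpoints of $e$ are vertices of ${\rm Newt}(p)$, hence appear in $p$). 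As $K$ is algebraically closed, $q=c\prod_j(y-c_j)^{m_j}$ with $c_j\in K^{*}$ and $\sum_j m_j=\ell$, so $\{{\rm in}_{\bf w}(p)=0\}$ in the torus is the union, carrying scheme multiplicity $m_j$ on the $j$th piece, of the codimension-one subtori $\{{\bf u}^{\bf e}=c_j\}$, each of which tropicalises to the hyperplane ${\bf e}^{\perp}$ that contains $\sigma_e$. Since ${\bf e}$ is primitive, the character lattice of each such subtorus is exactly $\ZZ^{n+2}\cap{\bf e}^{\perp}$, so every lattice index occurring in \cite[Definition 3.4.3]{MS14} is $1$; adding up the scheme multiplicities gives ${\rm mult}(\sigma_e)=\sum_j m_j=\ell$, as desired.

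Two points will need attention. First, ${\rm trop}(Z_{A,{\bf f}})$ carries the lineality space ${\rm row}(A)=\Pi^{\perp}$, with $\Pi$ the plane spanned by ${\rm Newt}(Z_{A,{\bf f}})$; since ${\bf e}\in\ker(A)$ we have ${\rm row}(A)\subseteq{\bf e}^{\perp}$, so both the normal-fan description and the multiplicity of \cite[Definition 3.4.3]{MS14} are compatible with quotienting by this lineality, and the two multiplicities genuinely coincide — I would make this compatibility explicit. Second, the one genuinely non-elementary ingredient is the theorem that initial ideals compute tropical multiplicities, together with the accompanying balancing and structure results; this is exactly the package \cite[Lemma 3.4.6]{MS14}, which I would import rather than reprove. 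Consequently the real work in this lemma is not conceptual but the bookkeeping above: reducing ${\rm in}_{\bf w}(p)$ to the univariate polynomial $q$ and checking that the lattice indices entering the definition of multiplicity are all trivial — both of which come down to ${\bf e}$ being the primitive lattice vector along $e$ and to the endpoints of $e$ being vertices, so that $q$ has the right degree and nonzero roots.
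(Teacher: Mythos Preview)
The paper does not supply a proof of this lemma at all: it simply states the result with the citation \cite[Lemma 3.4.6]{MS14} and moves on. So there is no ``paper's own proof'' to compare against; the paper treats this as a black box imported from Maclagan--Sturmfels.

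Your proposal goes further and sketches the standard hypersurface argument behind that black box, and the sketch is correct: for ${\bf w}$ in the relative interior of $\sigma_e$ one has ${\rm in}_{\bf w}(p)={\bf u}^{\bf a}q({\bf u}^{\bf e})$ with $q$ univariate of degree $\ell$ and nonvanishing at $0$; factoring $q$ over the algebraically closed field $K$ exhibits the minimal primes of ${\rm in}_{\bf w}(\langle p\rangle)$ in the Laurent ring as $\langle {\bf u}^{\bf e}-c_j\rangle$ with multiplicities $m_j$, and since ${\bf e}$ is primitive the lattice indices in \cite[Definition 3.4.3]{MS14} are all $1$, giving ${\rm mult}(\sigma_e)=\sum_j m_j=\ell$. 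That is precisely the content of \cite[Lemma 3.4.6]{MS14} specialised to hypersurfaces.

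One small wording issue: in your final paragraph you say you would ``import \cite[Lemma 3.4.6]{MS14} rather than reprove'' it, but that is exactly the statement you are proving, so this reads as circular. What you presumably mean is that you import \cite[Definition 3.4.3]{MS14} (the definition of multiplicity via initial ideals) and the structure theorem \cite[Theorem 3.3.5]{MS14} that guarantees the initial ideal is well-behaved on the relative interior of $\sigma_e$; with those in hand your computation is self-contained and does not appeal to Lemma 3.4.6 itself. Tightening that sentence would remove the apparent circularity.
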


Note that if an edge of ${\rm Newt}(Z_{A,{\bf f}}) $ is expressed by a vector, then its lattice length
is the \emph{content} of that vector. Therefore it is enough to find out $\text{mult}(\sigma)$ for each
$\sigma$ in the $(n+1)$-skeleton of the normal fan of ${\rm Newt}(Z_{A,{\bf f}}) $. We cannot achieve this
directly from the definition of multiplicity, because it involves the implicit polynomial of $Z_{A,{\bf f}}$,
which is unknown to us. However, we can find out those multiplicities with the help of another result in
\cite{MS14}. The following proposition will lead to our main theorem.

\begin{prop}\label{prop:mult}
For every maximal cell $\sigma$ in the $(n+1)$-skeleton of the normal fan of ${\rm Newt}(Z_{A,{\bf f}}) $,
$\text{mult}(\sigma)$ is the content of the corresponding column vector of $P_A \cdot V_{\bf f}$.
\end{prop}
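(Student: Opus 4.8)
The plan is to compute $\mathrm{mult}(\sigma)$ via the stable-intersection/tropical-multiplicity formalism of \cite[Chapter 3]{MS14}, using that $\mathrm{trop}(Z_{A,\mathbf{f}})$ is, by Proposition \ref{prop:ha}, the Minkowski sum $\mathrm{trop}(X_A)+\mathrm{trop}(Y_{\mathbf{f}})=\mathrm{row}(A)+\bigcup_z \RR_{\ge 0}\mathbf{v}_z$. A maximal cell $\sigma$ in the $(n{+}1)$-skeleton of the normal fan is, as noted in the proof of Proposition \ref{prop:direc}, spanned by $\mathrm{row}(A)$ together with one ray $\RR_{\ge 0}\mathbf{v}_z$, i.e.\ by $\mathrm{row}(A)$ and one column $\mathbf{v}$ of $V_{\mathbf{f}}$. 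I would invoke the formula for the multiplicity of a cell in a Minkowski sum of tropical cycles (the tropical analogue of the fact that multiplicities multiply and a lattice-index factor appears): when $\sigma = \tau_1 + \tau_2$ with $\tau_1\subset\mathrm{trop}(X_A)$ the full plane $\mathrm{row}(A)$ (multiplicity $1$, since $X_A$ is a toric variety whose tropicalization is a classical linear space with trivial weights by Lemma \ref{lem:X}) and $\tau_2=\RR_{\ge 0}\mathbf{v}$ a ray of $\mathrm{trop}(Y_{\mathbf{f}})$ (whose weight I must identify), one gets
\[
\mathrm{mult}(\sigma)\;=\;m(\tau_2)\cdot \bigl[\,N_\sigma : N_{\tau_1}+N_{\tau_2}\,\bigr],
\]
where $N_{(-)}$ denotes the lattice spanned by integer points in the linear span, and $m(\tau_2)$ is the weight that the balanced fan $\mathrm{trop}(Y_{\mathbf{f}})$ carries on the ray through $\mathbf{v}$.

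The first substantive step is to pin down $m(\tau_2)$. By Lemma \ref{lem:Y} the ray of $\mathrm{trop}(Y_{\mathbf{f}})$ in a given direction is the image of possibly several of the valuation points $\mathbf{u}_j$ that happen to be collinear, and the construction of $V_{\mathbf{f}}$ precisely adds such collinear $\mathbf{u}_j$'s together to produce the primitive-direction representative $\mathbf{v}$. Concretely, $\mathbf{v} = \sum_{j\in J}\mathbf{u}_j$ for the indices $j$ in the relevant collinearity class; the lattice length (content) of $\mathbf{v}$ is therefore exactly the weight that balancing assigns to that ray of the curve's tropicalization — this is where the "aggregate collinear valuation vectors" bookkeeping pays off. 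So $m(\tau_2)\cdot(\text{primitive generator of }\RR_{\ge 0}\mathbf{v}) = \mathbf{v}$, i.e.\ $m(\tau_2) = \mathrm{content}(\mathbf{v})$ once we agree to represent the ray by the integer vector $\mathbf{v}$ itself rather than its primitive generator.

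The second step is the lattice-index computation $[N_\sigma : N_{\tau_1}+N_{\tau_2}]$, and identifying it with the normalization constant $\delta$ (the gcd of the $n\times n$ minors $\det A_{[i,j]}$) that appears in the definition of $P_A$. Here $N_{\tau_1}=\mathbb{Z}^{n+2}\cap\mathrm{row}(A)$, $N_{\tau_2}=\mathbb{Z}\mathbf{v}'$ with $\mathbf{v}'$ primitive, and $N_\sigma=\mathbb{Z}^{n+2}\cap(\mathrm{row}(A)+\RR\mathbf{v})$ is a rank-$(n{+}1)$ lattice. The key linear-algebra fact is that $P_A\cdot\mathbf{v}$ is, up to this index, the primitive integer generator of the edge direction: for a primitive vector $\mathbf{v}'\notin\mathrm{row}(A)$, the vector $P_A\cdot\mathbf{v}'$ lies in $\ker(P_A)^\perp\cap\mathbf{v}'^\perp = \mathrm{ker}(A)$ (using $\mathrm{row}(P_A)=\ker(A)$ and skew-symmetry as in Proposition \ref{prop:direc}), and one checks by a Cramer's-rule / exterior-algebra computation that its entries are $\tfrac{1}{\delta}$ times the $(n{+}1)\times(n{+}1)$ minors of the matrix $\begin{bmatrix}A\\ \mathbf{v}'^{T}\end{bmatrix}$ — exactly the Plücker coordinates of the hyperplane $\mathrm{row}(A)+\RR\mathbf{v}'$ — hence $P_A\cdot\mathbf{v}'$ is precisely the primitive generator of the corresponding edge ray times the index $[N_\sigma : N_{\tau_1}+N_{\tau_2}]$, with the $\tfrac1\delta$ already built in to make the result primitive when $\delta$ is the gcd of the $n\times n$ minors. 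Combining: $\mathrm{mult}(\sigma) = \mathrm{content}(\mathbf{v})\cdot(\text{index}) = \mathrm{content}(P_A\cdot\mathbf{v})$, since scaling $\mathbf{v}'$ up to $\mathbf{v}=\mathrm{content}(\mathbf{v})\cdot\mathbf{v}'$ and then applying $P_A$ multiplies the primitive edge generator by exactly $\mathrm{content}(\mathbf{v})\cdot(\text{index})$. By Lemma \ref{lem:length} this is the lattice length of the corresponding edge of $\mathrm{Newt}(Z_{A,\mathbf{f}})$, which is what we wanted.

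The main obstacle I anticipate is the second step: making the exterior-algebra identification of $P_A\cdot\mathbf{v}'$ with the Plücker vector of $\mathrm{row}(A)+\RR\mathbf{v}'$ clean, and correctly tracking how the two normalization constants interact — the gcd $\delta$ hard-wired into $P_A$ versus the lattice index $[N_\sigma:N_{\tau_1}+N_{\tau_2}]$ — so that they combine to give exactly $\mathrm{content}(P_A\cdot\mathbf{v})$ with no leftover factor. A secondary subtlety is legitimately quoting the Minkowski-sum multiplicity formula from \cite{MS14}: one must check the transversality/genericity hypothesis, i.e.\ that $\mathrm{row}(A)$ and the ray $\RR\mathbf{v}$ together span a genuinely $(n{+}1)$-dimensional space (equivalently $\mathbf{v}\notin\mathrm{row}(A)$), which holds exactly for the columns of $V_{\mathbf{f}}$ that contribute nonzero columns to $P_A\cdot V_{\mathbf{f}}$ — and $P_A\cdot\mathbf{v}=0$ precisely when $\mathbf{v}\in\mathrm{row}(A)=\mathrm{row}(P_A)^\perp$, so the bookkeeping is consistent.
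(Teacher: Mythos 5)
Your approach is genuinely different from the paper's, so let me compare them before pointing out the gap. You decompose a maximal cell $\sigma$ of $\text{trop}(Z_{A,\mathbf{f}})$ directly as a Minkowski sum $\tau_1 + \tau_2$ with $\tau_1 = \text{row}(A) \subset \text{trop}(X_A)$ and $\tau_2 = \RR_{\ge 0}\mathbf{v}$ a ray of $\text{trop}(Y_{\mathbf{f}})$, and then invoke a multiplicity formula for Minkowski sums. The paper instead lifts $Z_{A,\mathbf{f}}$ to a variety $Z$ in the larger space $\PP^{n+1}\times\CC^m$ by appending the coordinates $x - z_1,\ldots,x-z_m$, shows that every maximal cone of $\text{trop}(Z)$ has multiplicity $1$ because its initial ideal splits as a toric ideal plus a linear ideal and is therefore prime, and only then pushes forward to $Z_{A,\mathbf{f}}$ via the monomial projection $\phi$, using the index formula $\text{mult}(\sigma') = \text{mult}(\sigma)\cdot[N_{\sigma'}:\phi(N_\sigma)]$ of \cite[(3.6.2)]{MS14}. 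Your lattice-index/Cramer's-rule computation identifying the entries of $\delta P_A\mathbf{v}'$ with the $(n{+}1)\times(n{+}1)$ minors of $A$ augmented by $\mathbf{v}'^T$, and the cancellation of $\delta$, is correct and is essentially the same arithmetic the paper performs to evaluate $[N_{\sigma'}:N]/[N_\sigma:N]$.

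The genuine gap is in the step identifying the ray weight $m(\tau_2)$ with $\text{content}(\mathbf{v})$. Lemma \ref{lem:Y} establishes only the set-theoretic description of $\text{trop}(Y_{\mathbf{f}})$ and the fact that it is balanced; it says nothing about the weights on the rays. Your formula $m(\tau_2)=\sum_{j\in J}\text{content}(\mathbf{u}_j)=\text{content}(\mathbf{v})$ is the standard statement for a curve parameterized birationally by $x$, but it is proved neither in the paper nor in your sketch, and it actually fails when $x\mapsto\mathbf{f}(x)$ is not generically injective: for $\mathbf{f}=(x^4,x^2,1)$ the image is the conic $u_0u_2=u_1^2$, whose tropicalization carries weight $1$ on the ray through $(4,2,0)$, whereas $\text{content}(4,2,0)=2$. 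A companion degree factor (the degree of the Hadamard map $X_A\times Y_{\mathbf{f}}\to Z_{A,\mathbf{f}}$) is likewise absent from the Minkowski-sum multiplicity formula you quote, which in the form you state it does not appear in \cite{MS14} or \cite{Cue10}. The whole point of the paper's lifting device is to sidestep exactly these two difficulties: in the lifted variety $Z$ the appended coordinates $x-z_j$ are linear in $x$, so the parameterization of $Z$ is tautologically injective and the tropical multiplicity $\text{mult}(\sigma)=1$ is established from scratch by the prime-initial-ideal argument, with no need to know the ray weights of $\text{trop}(Y_{\mathbf{f}})$. To make your route rigorous you would need an independent proof of the ray-weight formula for $\text{trop}(Y_{\mathbf{f}})$ together with a precise citation (with its degree hypothesis) for the Minkowski-sum multiplicity formula; the cleanest proof of the former is in effect the paper's lifting argument applied to the curve $Y_{\mathbf{f}}$ alone.
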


\begin{proof}
Let $\Sigma'$ be $\text{trop}(Z_{A,{\bf f}})$, which is a pure weighted polyhedral complex in $\RR^{n+2}$.
Suppose there are $m$ roots $z_{1},\ldots,z_{m}\in K$ of $\prod_{i=0}^{n+1}{f_{i}(z)}$. We define another pure
weighted polyhedral complex $\Sigma \subseteq \RR^{n+m+2}$ as follows:
\[
\Sigma=\{({\bf u,v})\in\RR^{n+m+2}|{\bf u}\in \text{row}(A),{\bf v}=\lambda\cdot e_{i},
1\le i\le m+1,\lambda\ge 0\},
\]
where $e_{i}$ is the vector with $i$-th component $1$ and others $0$ for $1\le i\le m$, and $e_{m+1}=-{\bf
1}$. Note that
\[
\Sigma=\text{trop}(Z),
\]
where $Z$ is the variety parameterized by
\[
\{((\bt^{\mb{a}_0}:\bt^{\mb{a}_1}:\ldots:\bt^{\mb{a}_{n+1}}),(x-z_{1},x-z_{2},\ldots,x-z_{m}))\in
\PP^{n+1}\times \CC^{m}|\,\,\bt\in (K^{*})^{n},x\in K\}.
\]

Note that we can get $\Sigma'$ from $\Sigma$ via a projection $\phi$, where $\phi$ keeps $u\in \text{row}(A)$
fixed and sends each $e_{i}$ to the transpose of the $i$-th column vector ${\bf v}_{i}$ in $V_{\bf f}$. So
$\phi$
maps a maximal cell $\sigma\in\Sigma $ to a maximal cell $\sigma'\in \Sigma'$. This correspondence of maximal
cells is a bijection. Suppose $\sigma'$ in the $(n+1)$-skeleton of the normal fan of ${\rm Newt}(Z_{A,{\bf
f}})$ corresponds to the vector ${\bf v}_{i}$. Then by \cite[(3.6.2)]{MS14}
\[
\text{mult}(\sigma')=\text{mult}(\sigma)\cdot[N_{\sigma'}:\phi(N_{\sigma})],
\]
where $N_{\sigma'}\subseteq \RR^{n+2}$ is the lattice generated by all integer points in the span of row
vectors of
$A$ and ${\bf v}_{i}^{T}$, and $N_{\sigma}\subseteq \RR^{n+m+2}$ is the lattice generated by all integer
points in the space ${\rm row}(A)\oplus \RR{\bf e}_{i}$. Let $N\subseteq \RR^{n+2}$ be the lattice generated
by row vectors of $A$ and ${\bf v}_{i}^{T}$. Then $N\subseteq N_{\sigma'},N_{\sigma}$ and
$[N_{\sigma'}:\phi(N_{\sigma})]=\frac{[N_{\sigma'}:N]}{[N_{\sigma}:N]}$. The lattice index $[N_{\sigma'}:N]$
is the greatest common divisor of all maximal minors of the matrix formed by all generating vectors of
$N_{\sigma'}$. Here this matrix is obtained from $A$ by adding one row vector ${\bf v}_{i}^{T}$ and its
maximal minors are the product of $\delta$ and one entry in the $i$-th column of $P_A \cdot V_{\bf f}$, so
$[N_{\sigma'}:N]$ is the product of $\delta$ and the content of the the $i$-th column vector of $P_A \cdot
V_{\bf f}$. Similarly, $[N_{\sigma}:N]$ is the greatest common divisor of all maximal minors of $A$, which is
$\delta$.

Finally the initial ideal $in_{\sigma}(I(Z))$ with respect to $\sigma$ of the ideal of $Z$ is the direct sum
of two prime ideals: the first one is toric, the second one is generated by linear polynomials. Then the quotient ring of $K[u_0,\ldots,u_{n+1}]$ modulo the toric ideal is a domain. Note that the quotient ring of $K[u_0,\ldots,u_{n+1},v_1,\ldots,v_m]$ modulo $in_{\sigma}(I(Z))$ is the quotient ring of $K[u_0,\ldots,u_{n+1}]$ modulo the toric ideal adjoining some variable(s) in $\{v_1,\ldots,v_m\}$, which is also a domain. Hence $in_{\sigma}(I(Z))$ is also prime. Then by the definition of multiplicity, $\text{mult}(\sigma)=1$. This finishes the proof.
\end{proof}

\begin{proof}[Proof of Theorem (\ref{thm:main})]
If $\,{\rm rank}(P_A \cdot V_{\bf f}) = 0\,$ then all column vectors of $V_{\bf f}$ belong to ${\rm row}(A)$,
which means $Z_{A,{\bf f}}$ contains the toric variety $X_{A}$ and has codimension $2$, so it is not a
hypersurface. If $\,{\rm rank}(P_A \cdot V_{\bf f}) = 1\,$, then essentially $\bf f$ provides one parameter
not appearing in $X_{A}$, which means that $Z_{A,{\bf f}}$ has codimension $1$ and is a toric hypersurface. If
$\,{\rm rank}(P_A \cdot V_{\bf f}) = 2\,$, then ${\rm Newt}(Z_{A,{\bf f}}) $ is a nondegenerate polygon and
with Proposition \ref{prop:direc}, Lemma \ref{lem:length} and Proposition \ref{prop:mult} we have proved
Theorem \ref{thm:main}.
\end{proof}

We illustrate Theorem \ref{thm:main} with the following example.
\begin{exa}\label{exa:H}
Let $Z_{A,{\bf f}}$ admit the following parameterization over $\CC$:
\[
(t_{1}^{2}(x^{2}+1):t_{1}t_{2}x^{3}(x-1):t_{1}t_{3}x(x+1):t_{2}^2(x-2)(x^2+1):t_{3}^{2}(x-1)^{2}(x+1)).
\]
In this example $A=\begin{bmatrix}
2&1&1&0&0 \\
0&1&0&2&0 \\
0&0&1&0&2
\end{bmatrix},d=2$. Then $P_{A}=\begin{bmatrix}
0&-2&2&1&-1 \\
2&0&-4&0&2 \\
-2&4&0&-2&0 \\
-1&0&2&0&-1 \\
1&-2&0&1&0
\end{bmatrix}$ and $\delta=2$. The linear factors of the univariate polynomials are $x,x-1,x+i,x-i,x+1,x-2$.
But we can combine $x\pm i$ into $x^2+1$. So the vectors are
\[
(0,3,1,0,0),(0,1,0,0,2),(2,0,0,2,0),(0,0,1,0,1),(0,0,0,1,0),(-2,-4,-2,-3,-3).
\]
Then the valuation matrix of ${\rm Newt}(Z_{A,{\bf f}})$ is
\[
V_{\bf f}=\begin{bmatrix}
0&0&2&0&0&-2 \\
3&1&0&0&0&-4 \\
1&0&0&1&0&-2 \\
0&0&2&0&1&-3 \\
0&2&0&1&0&-3 \\
\end{bmatrix}.
\]
Using ideal elimination in {\tt Macaulay 2} we can compute the implicit polynomial of ${\rm Newt}(Z_{A,{\bf
f}})$ in variables
$u_{0},u_{1},u_{2},u_{3},u_{4}$:
\begin{equation*}
\begin{split}
&16u_{1}^{4}u_{2}^{16}u_{3}^{2}-40u_{0}u_{1}^{4}u_{2}^{14}u_{3}^{2}u_{4}+8u_{0}^{2}u_{1}^{2}u_{2}^{14}u_{3}^{3}u_{4}-16u_{0}u_{1}^{6}u_{2}^{12}u_{3}u_{4}^{2}+20u_{0}^{2}u_{1}^{4}u_{2}^{12}u_{3}^{2}u_{4}^{2}\\
+&159u_{0}^{3}u_{1}^{2}u_{2}^{12}u_{3}^{3}u_{4}^{2}+u_{0}^{4}u_{2}^{12}u_{3}^{4}u_{4}^{2}+54u_{0}^{2}u_{1}^{6}u_{2}^{10}u_{3}u_{4}^{3}-77u_{0}^{3}u_{1}^{4}u_{2}^{10}u_{3}^{2}u_{4}^{3}+379u_{0}^{4}u_{1}^{2}u_{2}^{10}u_{3}^{3}u_{4}^{3}\\
+&5u_{0}^{2}u_{1}^{8}u_{2}^{8}u_{4}^{4}-27u_{0}^{3}u_{1}^{6}u_{2}^{8}u_{3}u_{4}^{4}-29u_{0}^{4}u_{1}^{4}u_{2}^{8}u_{3}^{2}u_{4}^{4}+163u_{0}^{5}u_{1}^{2}u_{2}^{8}u_{3}^{3}u_{4}^{4}-12u_{0}^{3}u_{1}^{8}u_{2}^{6}u_{4}^{5}\\
-&35u_{0}^{4}u_{1}^{6}u_{2}^{6}u_{3}u_{4}^{5}-425u_{0}^{5}u_{1}^{4}u_{2}^{6}u_{3}^{2}u_{4}^{5}+4u_{0}^{6}u_{1}^{2}u_{2}^{6}u_{3}^{3}u_{4}^{5}+87u_{0}^{5}u_{1}^{6}u_{2}^{4}u_{3}u_{4}^{6}+717u_{0}^{6}u_{1}^{4}u_{2}^{4}u_{3}^{2}u_{4}^{6}\\
+&103u_{0}^{6}u_{1}^{6}u_{2}^{2}u_{3}u_{4}^{7}-115u_{0}^{7}u_{1}^{4}u_{2}^{2}u_{3}^{2}u_{4}^{7}+12u_{0}^{7}u_{1}^{6}u_{3}u_{4}^{8}+4u_{0}^{8}u_{1}^{4}u_{3}^{2}u_{4}^{8}.\\
\end{split}
\end{equation*}
The vertices of Newton polygon of this implicit polynomial are
\[
(0,4,16,2,0),(2,8,8,0,4),(3,8,0,6,5),(7,6,0,1,8),(8,4,0,2,8),(4,0,12,4,2).
\]
The directed edges are
\[
\begin{split}
&(2,4,-8,-2,4),(-4,4,4,-2,-2),(-4,-4,12,2,-6) \\
&(1,-2,0,1,0),(4,-2,-6,1,3),(1,0,-2,0,1),
\end{split}
\]
and the product $P_{A}\cdot V_{\bf f}$ is
\[
\begin{bmatrix}
-4&-4&2&1&1&4\\
-4&4&4&-2&0&-2\\
12&4&-8&0&-2&-6\\
2&-2&-2&1&0&1\\
-6&-2&4&0&1&3\\
\end{bmatrix}.
\]
The column vectors of this matrix are the directed edges of ${\rm Newt}(Z_{A,{\bf f}})$.
\end{exa}

Next we compare our result with existing work. The Philippon-Sombra formula \cite[Proposition 4.1]{PS08}
computes the degree of $Z_{A,{\bf f}}$ from $A$ and $\bf f$. Let $B$ be a $2\times (n+2)$ matrix such that its
entries are integers and its row vectors span the kernel of $A$, and ${\bf b}_{0},\ldots,{\bf b}_{n+1}$ be the
column vectors of $B$. Given any column vector ${\bf v}=(v_{0},\ldots,v_{n+1})^{T}\in \RR^{n+2}$, we define a
polytope as the convex hull of the following $n+2$ vertices: ${\bf c}_{i}$ is the vector formed by the $i$-th
column vector of $A$ and $v_{i-1}$ for $1\le i\le n+2$. Then the vector $B\cdot {\bf v}$ admits a
triangulation $T$ of this polytope: for all $0\le i<j\le n+1$, the $n$-dimensional simplex formed by vertices
excluding ${\bf c}_{i},{\bf c}_{j}$ belongs to this triangulation if and only if the vector $B\cdot {\bf v}$
belongs to the nonnegative span of ${\bf b}_{i}$ and ${\bf b}_{j}$. Now we define a sum
\[
\partial_{\bf v}(A)=\frac{1}{\delta}\mathop{\sum}_{\sigma \in T}{|A_{\sigma}|\sum_{i\in \sigma}{v_{i}}}.
\]
Here $A_{\sigma}$ is the $n\times n$ minor of $A$ that contains all columns corresponding to $\sigma$.

\begin{prop}\label{prop:ps}
The degree of an almost-toric hypersurface $Z_{A,{\bf f}}$ is
\[
\mathop{\sum}_{{\bf v} \text{ is a column vector of } V_{f}}{\partial_{\bf v}(A)}.
\]
\end{prop}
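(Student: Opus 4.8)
The plan is to match $\partial_{\bf v}(A)$ with the ``local term at a place'' in the Philippon--Sombra degree formula \cite[Proposition 4.1]{PS08}, and then to reorganize the resulting sum, which is naturally indexed by the roots of $f_0\cdots f_{n+1}$ together with $\infty$, into a sum over the columns of $V_{\bf f}$.

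First I would unwind the definition of $\partial_{\bf v}(A)$ and show that it equals $n\int_{{\rm conv}(A)}h_{\bf v}\,d{\rm vol}$, where ${\rm vol}$ is the normalized volume on ${\rm conv}(A)$ and $h_{\bf v}$ is the piecewise-linear function interpolating the values $v_i$ at the points ${\bf a}_i$ along the triangulation $T$ attached to ${\bf v}$. Two facts enter. First, $T$ is exactly the regular triangulation of ${\rm conv}(A)$ induced by the heights ${\bf v}$: since all columns of $A$ sum to $d$, the linear and affine dependences of $({\bf a}_0,\dots,{\bf a}_{n+1})$ coincide and span $\ker A$, so the rows of $B$ form a Gale dual of this configuration, and the rule ``$\sigma\in T\iff B{\bf v}\in\RR_{\ge0}{\bf b}_i+\RR_{\ge0}{\bf b}_j$'', where $\sigma$ is the complement of $\{i,j\}$, is the standard secondary-fan/Gale description of the regular triangulation at $\overline{\bf v}=B{\bf v}$. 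Second, $|A_\sigma|=\delta\cdot{\rm vol}(\sigma)$; since $h_{\bf v}$ is affine on $\sigma$, $\int_\sigma h_{\bf v}\,d{\rm vol}={\rm vol}(\sigma)\cdot\frac1n\sum_{i\in\sigma}v_i$, and summing over $\sigma\in T$ converts $\frac1\delta\sum_{\sigma\in T}|A_\sigma|\sum_{i\in\sigma}v_i$ into $n\int_{{\rm conv}(A)}h_{\bf v}\,d{\rm vol}$. This expression is precisely the local contribution that \cite[Proposition 4.1]{PS08} attaches to a place of $\PP^1$ carrying the vanishing-order vector ${\bf v}$, whence $\deg Z_{A,{\bf f}}=\sum_\xi\partial_{{\bf u}_\xi}(A)$ with $\xi$ ranging over the roots of $f_0\cdots f_{n+1}$ and $\infty$, and ${\bf u}_\xi=({\rm ord}_\xi f_0,\dots,{\rm ord}_\xi f_{n+1})$.

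It then remains to match the index sets. By construction the columns of $V_{\bf f}$ are ${\bf u}_\infty=-\sum_j{\bf v}_j$ together with the vectors obtained from $\{{\bf u}_\xi:\xi\text{ a finite root}\}$ by replacing each maximal collinear family by its sum. All ${\bf u}_\xi$ with $\xi$ finite lie in $\NN^{n+2}\setminus\{0\}$, so collinear ones are positive multiples of one another; hence $B$ sends such a family into a single ray, they all yield the same triangulation $T$, and with $T$ fixed $\partial_\bullet(A)$ is a linear functional of the height vector. Consequently $\partial_{\bf v}(A)+\partial_{{\bf v}'}(A)=\partial_{{\bf v}+{\bf v}'}(A)$ for collinear ${\bf v},{\bf v}'$ of this kind, and iterating over each collinear family collapses $\sum_\xi\partial_{{\bf u}_\xi}(A)$ (finite $\xi$) to the sum over the first $\ell$ columns of $V_{\bf f}$; adding $\partial_{{\bf u}_\infty}(A)$ back yields $\sum_{{\bf v}\text{ a column of }V_{\bf f}}\partial_{\bf v}(A)=\deg Z_{A,{\bf f}}$.

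The hard part is the first step, the identification with \cite[Proposition 4.1]{PS08}: one has to reconcile the normalizations — the factor $\delta$ (Philippon and Sombra allow the ambient lattice to be a proper sublattice of $\ker A$), the choice between the two regular triangulations associated to $B{\bf v}$ and $-B{\bf v}$, and the separate handling of the place at infinity. A self-contained alternative avoids \cite{PS08} entirely: since every $f_i\ne0$, the Newton polygon of $Z_{A,{\bf f}}$ meets each coordinate hyperplane, so by homogeneity of the implicit equation $\deg Z_{A,{\bf f}}=-\sum_i\min\{w_i:{\bf w}\in Q_0\}$, where $Q_0$ is the polygon anchored at the origin whose directed edges are the columns of $P_A\cdot V_{\bf f}$ (this uses only Theorem \ref{thm:main}); identifying the vertex of $Q_0$ minimizing $w_i$ with the cone of ${\rm trop}(Z_{A,{\bf f}})={\rm row}(A)+{\rm trop}(Y_{\bf f})$ that contains $-e_i$ then reduces the computation to the same mixed-volume count, leaving the identical normalization bookkeeping to be done by hand.
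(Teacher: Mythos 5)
The paper's ``proof'' of Proposition~\ref{prop:ps} is a single sentence: it is asserted to be a direct corollary of \cite[Proposition~4.1]{PS08}, with no translation of notation, no verification that $\partial_{\bf v}(A)$ equals the Philippon--Sombra local term, and no argument for why one may pass from a sum over actual places of $\PP^1$ to a sum over the aggregated columns of $V_{\bf f}$. Your proposal takes the same route through \cite{PS08} but actually supplies the missing content, and the most valuable piece is precisely the aggregation step that the paper glides past: you observe that the columns of $V_{\bf f}$ arise by replacing collinear families of order vectors ${\bf u}_\xi\in\NN^{n+2}\setminus\{0\}$ by their sums, that such collinear families are positive multiples of one another and hence Gale-dualize to the same ray (so they share the triangulation $T$), and that with $T$ fixed $\partial_\bullet(A)$ is linear, giving $\partial_{\bf v}(A)+\partial_{{\bf v}'}(A)=\partial_{{\bf v}+{\bf v}'}(A)$. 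This lemma is genuinely needed for the statement as written and the paper never mentions it. Your identification $\partial_{\bf v}(A)=n\int_{{\rm conv}(A)}h_{\bf v}\,d{\rm vol}$ via the regular-triangulation/Gale picture is also the right bridge to \cite{PS08}, and your closing self-contained alternative (reading off $\deg Z_{A,{\bf f}}=-\sum_i\min\{w_i:{\bf w}\in Q_0\}$ from the anchored polygon produced by Theorem~\ref{thm:main}, then comparing vertex coordinates with tropical intersection counts as in Proposition~\ref{prop:deg}) is a clean bypass of \cite{PS08} that stays inside the paper's own toolkit. The one thing you should not leave dangling if you write this up is the normalization bookkeeping you flag yourself (the $\delta$ factor, the sign of $B{\bf v}$ versus $-B{\bf v}$ selecting upper versus lower hull, and confirming that the $\infty$ column $-\sum_j{\bf v}_j$ is exactly $(-\deg f_0,\ldots,-\deg f_{n+1})$); these are routine but they are exactly the kind of thing that can flip a sign, and the paper's one-line citation gives you no safety net to check against.
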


This proposition is a direct corollary of \cite[Proposition 4.1]{PS08}.

\begin{rem}
For some ${\bf v}$, $\partial_{\bf v}(A)$ is negative, so the computation of degree using the Philippon-Sombra
formula is not very simple. The author attempted to obtain an alternative interpretation of the formula such
that all
summands are positive, but failed. However in Section \ref{s:alg} we present an algorithm to compute the
implicit polynomial of ${\rm Newt}(Z_{A,{\bf f}}) $, and in step $(3)$ we can compute the degree of this
polynomial efficiently.
\end{rem}

\cite[Theorem 5.2]{STY07} also provides an alternative way to compute the degree of $Z_{A,{\bf f}}$, using
tropical geometry. Applying this theorem to our almost-toric hypersurface $Z_{A,{\bf f}}$ we have the
following corollary.

\begin{cor}\label{cor:deg}
For a generic column vector ${\bf w}\in \RR^{n+2}$, the $i$-th coordinate of the vertex $face_{\bf w}({\rm
Newt}(Z_{A,{\bf f}}))$ is the number of intersection points, each counted with its \emph{intersection
multiplicity}, of the tropical hypersurface $\text{trop}(Z_{A,{\bf f}})$ with the half line ${\bf w}+\RR_{\ge
0}{\bf e}_{i}$.
\end{cor}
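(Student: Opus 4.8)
The plan is to obtain this by applying \cite[Theorem 5.2]{STY07} to $V=Z_{A,{\bf f}}$; the genericity of ${\bf w}$ is precisely what makes the transfer clean. First I would record the standing assumptions: $Z_{A,{\bf f}}$ is a hypersurface (so by Theorem~\ref{thm:main} we are in case (b) or (c), i.e.\ ${\rm rank}(P_A\cdot V_{\bf f})\ge 1$), its implicit polynomial $p$ is irreducible, and, as in Section~\ref{s:intro}, every $f_j$ is a nonzero polynomial. Next I would note that, since $K$ carries the trivial valuation, $\text{trop}(Z_{A,{\bf f}})$ is exactly the codimension-one skeleton of the normal fan of ${\rm Newt}(Z_{A,{\bf f}})$, with the weight ${\rm mult}(\tau)$ on a maximal cell $\tau$ equal to the lattice length of the dual edge of ${\rm Newt}(Z_{A,{\bf f}})$ (Lemma~\ref{lem:length}); equivalently, the tropical polynomial of $p$ is the support function ${\bf y}\mapsto\min_{{\bf a}\in{\rm Newt}(Z_{A,{\bf f}})}\langle{\bf a},{\bf y}\rangle$. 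We already have $\text{trop}(Z_{A,{\bf f}})$, and its weights, in closed form from Corollary~\ref{cor:trop} and Proposition~\ref{prop:mult}, so everything below is explicit.

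The heart of the argument is a one-variable computation. Fix a generic ${\bf w}$, put ${\bf v}=face_{\bf w}({\rm Newt}(Z_{A,{\bf f}}))$ (a vertex of the Newton polygon), and consider $\phi(s)=\min_{{\bf a}\in{\rm Newt}(Z_{A,{\bf f}})}\langle{\bf a},{\bf w}+s{\bf e}_i\rangle$, a concave piecewise-linear function of $s\ge 0$. Its right slope at $s=0$ is $v_i$, and its slope for $s$ large is $\min_{\bf a}a_i$. Each breakpoint $s_0>0$ of $\phi$ is a transverse crossing of the half line ${\bf w}+\RR_{\ge 0}{\bf e}_i$ with the relative interior of a unique maximal cell $\tau$ of $\text{trop}(Z_{A,{\bf f}})$, and the drop in slope of $\phi$ there equals the stable intersection multiplicity of that half line with $\tau$: if $\tau$ is dual to the edge $[{\bf v}',{\bf v}'']$ of ${\rm Newt}(Z_{A,{\bf f}})$, the slope drop is $|({\bf v}'')_i-({\bf v}')_i|={\rm mult}(\tau)\cdot|\langle{\bf n}_\tau,{\bf e}_i\rangle|$, where ${\bf n}_\tau$ is the primitive normal to ${\rm lin}\,\tau$, and the right-hand side is exactly the stable multiplicity of \cite[Theorem 5.2]{STY07} (this is the edge--facet duality of Lemma~\ref{lem:length} together with an elementary lattice-index computation). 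Summing the slope drops over all breakpoints with $s>0$ shows that the number of intersection points of $\text{trop}(Z_{A,{\bf f}})$ with ${\bf w}+\RR_{\ge 0}{\bf e}_i$, counted with intersection multiplicity, equals $v_i-\min_{{\bf a}\in{\rm Newt}(Z_{A,{\bf f}})}a_i$.

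Finally I would show $\min_{{\bf a}\in{\rm Newt}(Z_{A,{\bf f}})}a_i=0$ for every $i$, i.e.\ $u_i\nmid p$, so that the expression above is literally $v_i$. This holds because $Z_{A,{\bf f}}$ is an irreducible hypersurface that is not a coordinate hyperplane: since every $f_j\ne 0$, the generic point $({\bf t}^{{\bf a}_0}f_0(x):\cdots:{\bf t}^{{\bf a}_{n+1}}f_{n+1}(x))$ of the parameterization has all coordinates nonzero, so $Z_{A,{\bf f}}\ne\{u_i=0\}$; hence the irreducible polynomial $p$ is not a scalar multiple of $u_i$, and therefore $u_i\nmid p$. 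Substituting $\min_{\bf a}a_i=0$ into the identity of the previous paragraph yields the corollary. The step I expect to be the main obstacle is the slope-drop/stable-multiplicity identification in the middle paragraph --- pinning down that the intersection multiplicity of \cite[Theorem 5.2]{STY07} is the one coming from Proposition~\ref{prop:mult} and Lemma~\ref{lem:length} --- together with the (routine but necessary) genericity bookkeeping ensuring that, for generic ${\bf w}$, the functional $face_{\bf w}$ selects a vertex and each ray ${\bf w}+\RR_{\ge 0}{\bf e}_i$ meets $\text{trop}(Z_{A,{\bf f}})$ only transversally and in relative interiors of maximal cells.
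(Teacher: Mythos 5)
Your argument is correct, and it does more work than the paper, which offers no proof at all: the corollary is stated as a bare specialization of \cite[Theorem 5.2]{STY07}, with the preceding sentence ``Applying this theorem to our almost-toric hypersurface $Z_{A,{\bf f}}$ we have the following corollary'' serving as the entire justification. Your proposal unpacks that specialization into a self-contained calculation. The two non-trivial things you supply that the paper leaves implicit are exactly the ones you flag: (i) the identification of the slope drop of $\phi(s)=\min_{{\bf a}\in{\rm Newt}(Z_{A,{\bf f}})}\langle{\bf a},{\bf w}+s{\bf e}_i\rangle$ at a wall-crossing with the intersection multiplicity $\mathrm{mult}(\tau)\cdot[\ZZ^{n+2}:N_\tau+\ZZ{\bf e}_i]$ from \cite{STY07}, which checks out since $N_\tau$ is saturated so $[\ZZ^{n+2}:N_\tau+\ZZ{\bf e}_i]=|\langle{\bf n}_\tau,{\bf e}_i\rangle|$ for ${\bf n}_\tau$ the primitive normal; and (ii) the normalization $\min_{{\bf a}\in{\rm Newt}(Z_{A,{\bf f}})}a_i=0$, via $u_i\nmid p$, which the paper also uses silently in Step~(3) of the implicitization algorithm. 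Your approach buys a reader a verifiable proof rather than a pointer, at the modest cost of re-deriving a special case of a cited theorem; the only small caveat is that you should state explicitly that ${\rm face}_{\bf w}$ and the tropical support function are both taken with the min convention, as in \cite{STY07}, since that is what makes $\phi$ concave and the slope drops nonnegative.
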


Here intersection multiplicity is defined in the remark after \cite[Theorem 5.2]{STY07} and ${\bf e}_{i}\in
\RR^{n+2}$ is the column vector with $i$-th component $1$ and others $0$ for $1\le i\le n+2$. From Corollary
\ref{cor:deg} we get the following proposition.

\begin{prop}\label{prop:deg}
Let $p(u_0,\ldots,u_{n+1})$ be the implicit polynomial of $Z_{A,{\bf f}}$. Then for a generic vector ${\bf
w}\in \RR^{n+2}$, the initial monomial $in_{\bf w}p$ is
\[
\mathop{\prod}_{(i,j)\in S}{u_{i}^{|{\bf e}_{i}P_{A}{\bf v}_{j}|}},
\]
where
\[
\begin{split}
S=&\{(i,j)|1\le i\le n+2,{\bf v}_{j} \text{ is a column vector of }V_{\bf f},\\
&{\bf e}_{i}^{T}P_{A}{\bf w},{\bf e}_{i}^{T}P_{A}{\bf v}_{j},{\bf v}_{j}^{T}P_{A}{\bf w}\text{ have the same sign.}\}.
\end{split}
\]

Hence the degree of $Z_{A,{\bf f}}$ is
\[
\mathop{\sum}_{(i,j)\in S}{|{\bf e}_{i}P_{A}{\bf v}_{j}|}.
\]
\end{prop}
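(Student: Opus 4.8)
The plan is to obtain Proposition \ref{prop:deg} as a translation of Corollary \ref{cor:deg} into explicit combinatorial data, using the description of $\text{trop}(Z_{A,{\bf f}})$ from Corollary \ref{cor:trop} together with the structure of the normal fan already pinned down in Propositions \ref{prop:direc} and \ref{prop:mult}. Concretely, fix a generic ${\bf w}\in\RR^{n+2}$. By Corollary \ref{cor:deg}, the vertex $face_{\bf w}({\rm Newt}(Z_{A,{\bf f}}))$ is $\sum_i m_i {\bf e}_i$, where $m_i$ counts (with intersection multiplicity) the intersection points of $\text{trop}(Z_{A,{\bf f}})$ with the half-line ${\bf w}+\RR_{\ge 0}{\bf e}_i$. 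Since $in_{\bf w}p=\prod_i u_i^{(face_{\bf w})_i}$ by definition of the initial monomial at a vertex, it suffices to identify each $m_i$ with the count $\sum_{j:(i,j)\in S}|{\bf e}_i^T P_A {\bf v}_j|$.

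The first step is to enumerate, for fixed $i$, the maximal cells of $\text{trop}(Z_{A,{\bf f}})$ that the half-line ${\bf w}+\RR_{\ge0}{\bf e}_i$ can meet. By Corollary \ref{cor:trop}, these maximal cells are the polyhedra $C_j={\rm row}(A)+\RR_{\ge 0}{\bf v}_j^T$, one for each column vector ${\bf v}_j$ of $V_{\bf f}$. For generic ${\bf w}$ the half-line meets $C_j$ in at most one point and never meets a lower-dimensional cell, so $m_i=\sum_j \mu_{ij}$ where $\mu_{ij}$ is the intersection multiplicity at the (possibly empty) intersection of the half-line with $C_j$. The second step is to work out when this intersection is nonempty and to compute $\mu_{ij}$ in that case. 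The cell $C_j$ is the affine span of the hyperplane-in-the-plane picture dual to the edge $P_A{\bf v}_j$ of the Newton polygon: $C_j$ is exactly the cone $\{{\bf y}: {\bf y}\cdot(P_A{\bf w})\ \text{matches the side of the edge selected by}\ {\bf w}\}$ intersected appropriately, but more usefully $C_j=\{{\bf u}+\lambda {\bf v}_j^T: {\bf u}\in{\rm row}(A),\lambda\ge 0\}$, which as a subset of $\RR^{n+2}$ is the $(n+1)$-dimensional cone whose lineality-plus-ray structure is governed by ${\rm row}(A)\oplus\RR_{\ge0}{\bf v}_j^T$. The key linear-algebra computation is then: the point ${\bf w}+t{\bf e}_i$ lies in $C_j$ for some $t\ge 0$ iff, writing everything against the two linear functionals ${\bf y}\mapsto {\bf e}_k^T P_A {\bf y}$ that cut out the relevant facets, one reads off that the three quantities ${\bf e}_i^T P_A{\bf w}$, ${\bf e}_i^T P_A {\bf v}_j$, ${\bf v}_j^T P_A {\bf w}$ have a common sign — this is precisely the defining condition of the set $S$ in the statement. (Here one uses that $P_A$ has rank $2$ with row space $\ker(A)^\perp = ({\rm row}(A))^{\perp\perp}$... more precisely row space equal to $\ker(A)$, so $P_A$ is a pairing that kills ${\rm row}(A)$ and detects the transverse directions ${\bf e}_i$ and ${\bf v}_j$.)

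The third step is the multiplicity count. The transverse intersection of the half-line ${\bf w}+\RR_{\ge0}{\bf e}_i$ with the maximal cell $C_j$ of weight $\text{mult}(\sigma_j)=1$ (by Proposition \ref{prop:mult}, since ${\rm rank}(P_A\cdot V_{\bf f})$ being at issue, all the relevant $\sigma$ have multiplicity the content of a column of $P_A V_{\bf f}$ — wait, more carefully: $\text{mult}(\sigma_j')$ equals the content of the $j$-th column of $P_A V_{\bf f}$ by Proposition \ref{prop:mult}) contributes, by the definition of tropical intersection multiplicity in \cite{STY07}, a factor equal to $\text{mult}(\sigma_j')$ times the index of the sublattice spanned by ${\bf e}_i$ and a lattice basis of (the linear span of) $\sigma_j'$ inside $\ZZ^{n+2}$. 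Writing $N_j$ for the lattice ${\rm row}_\ZZ(A)+\ZZ{\bf v}_j^T$ and invoking the same minor computation already carried out in the proof of Proposition \ref{prop:mult} — where the maximal minors of $[A;{\bf v}_j^T]$ were shown to be $\delta$ times the entries of the $j$-th column of $P_A V_{\bf f}$ — one finds that the product (multiplicity of $\sigma_j'$) $\times$ (lattice index contributed by adding ${\bf e}_i$) collapses to $|{\bf e}_i^T P_A {\bf v}_j|$. Combining, $m_i = \sum_{j:(i,j)\in S} |{\bf e}_i^T P_A {\bf v}_j|$, which gives the initial monomial formula, and summing the exponents over $i$ (equivalently, noting $\deg Z_{A,{\bf f}}=\sum_i (face_{\bf w})_i$ since all columns of $P_A$ sum to zero makes the polytope live in $\{{\bf y}:{\bf y}\cdot{\bf 1}=\deg\}$) yields the degree formula.

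\textbf{Main obstacle.} The delicate part is Step two together with the bookkeeping in Step three: correctly pinning down the sign conditions that detect whether the half-line ${\bf w}+\RR_{\ge0}{\bf e}_i$ actually enters $C_j$ (rather than its opposite or a parallel translate), and then checking that the tropical-intersection-multiplicity factor — a lattice index — combines with the cell multiplicity from Proposition \ref{prop:mult} to give exactly $|{\bf e}_i^T P_A {\bf v}_j|$ with no spurious factor of $\delta$. I expect the sign analysis to be the genuine content: it amounts to understanding the geometry of the rank-$2$ skew form $P_A$ restricted to the three vectors ${\bf e}_i,{\bf v}_j,{\bf w}$ modulo ${\rm row}(A)$, where the pairings ${\bf e}_i^T P_A {\bf v}_j$, ${\bf e}_i^T P_A {\bf w}$, ${\bf v}_j^T P_A {\bf w}$ are (up to common positive scaling) the three $2\times 2$ "areas" of the projected triangle, and the half-line meets the cone exactly when the relevant two of these areas agree in sign with the third, which is the condition defining $S$.
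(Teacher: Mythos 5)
Your plan follows essentially the same route as the paper's proof: both start from Corollary~\ref{cor:deg}, both identify the condition for the half-line ${\bf w}+\RR_{\ge0}{\bf e}_i$ to meet the cell ${\rm row}(A)+\RR_{\ge0}{\bf v}_j^T$ with the three-sign condition defining $S$ (via the determinant identity $\det[{\bf b}\ {\bf a}\ A^T]=\delta\,{\bf a}^TP_A{\bf b}$), and both compute the intersection multiplicity as a lattice index that collapses to $|{\bf e}_i^TP_A{\bf v}_j|$. The one place your sketch is vaguer than the paper is the sign analysis: the paper pins it down by explicitly solving ${\bf u}^T+\lambda_1{\bf v}_j={\bf w}+\lambda_2{\bf e}_i$ and translating $\lambda_1,\lambda_2>0$ into sign conditions on the determinants, whereas you gesture at the ``three areas'' picture; conversely you are more careful than the paper about the factorization of the STY07 intersection multiplicity into cell multiplicity times lattice index and the cancellation of the content $c_j$ and of $\delta$, which the paper compresses into one line.
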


\begin{proof}
If $\bf w$ is generic, then $\text{trop}(Z_{A,{\bf f}})$ and ${\bf w}+\RR_{\ge 0}{\bf e_i}$ have at most one
intersection point. Let ${\bf v}_{j}$ be the $j$-th column vector of matrix $V_{\bf f}$ and ${\rm
col}(A^{T})=\{{\bf u}^{T}|{\bf u}\in {\rm row}(A)\}$. Suppose there is an intersection point of the maximal
cone spanned by ${\rm row}(A)$ and the half line $\RR_{\ge 0}{{\bf v}_{j}}$ and ${\bf w}+\RR_{\ge 0}{\bf
e}_i$. Since $\bf w$ is generic, the $2$-dimensional subspace spanned by ${\bf w},{\bf e}_i$ has a unique
common point with a translation of a codimension $2$ subspace: ${\bf w}-{\rm col}(A^{T})$. Then there exists a
unique pair of nonzero (because $\bf w$ is generic) real numbers $\lambda_1,\lambda_2$ and ${\bf u}\in {\rm
row}(A)$ such that ${\bf u}^{T}+\lambda_{1}{\bf v}_{j}={\bf w}+\lambda_{2}{\bf e}_{i}$. Then the intersection
point exists if and only if $\lambda_1,\lambda_2>0$. Note that $\lambda_{1}>0$ if and
only if ${\bf v}_{j}$ and ${\bf w}$ are on the same side of the hyperplane spanned by ${\rm col}(A^{T})=\{{\bf
u}^{T}|{\bf u}\in {\rm row}(A)\}$ and ${\bf e}_{i}$ (if they don't span a hyperplane, then ${\bf e}_{i}\in
{\rm col}(A^{T})$ and no intersection point exists since $\bf w$ is generic). So $\det(\begin{bmatrix}{\bf
v}_{j} & {\bf e}_{i} & A^{T} \end{bmatrix})$ and $\det(\begin{bmatrix}{\bf w} & {\bf e}_{i} & A^{T}
\end{bmatrix})$ have the same sign.

Note that for any column vectors ${\bf a,b}\in \RR^{n+2}$, $\det(\begin{bmatrix}{\bf b}^{T} \\ {\bf a}^{T} \\
A \end{bmatrix})=\det(\begin{bmatrix}{\bf b} & {\bf a} & A^{T} \end{bmatrix})=\delta{\bf a}^{T}P_{A}{\bf b}$.
Hence $\lambda_{1}>0$ if and only if ${\bf e}_{i}^{T}P_{A}{\bf w}$ and ${\bf e}_{i}^{T}P_{A}{\bf v}_{j}$
have the same sign. Similarly, $\lambda_{2}>0$ if and only if ${\bf w}$ and ${\bf e}_{i}$ are on the
opposite sides of the hyperplane spanned by ${\rm col}(A^{T})$ and ${\bf v}_{j}$, which is equivalent to ${\bf
e}_{i}^{T}P_{A}{\bf v}_{j}$ and ${\bf v}_{j}^{T}P_{A}{\bf w}$ having the same sign. So $(i,j)\in S$ if and only
if there is an intersection point of the maximal cone spanned by ${\rm row}(A)$ and the half line $\RR_{\ge
0}{{\bf v}_{j}}$ with the half line ${\bf w}+\RR_{\ge 0}{\bf e_i}$. Next it suffices to show that the
intersection multiplicity of this point is $|{\bf e}_{i}P_{A}{\bf v}_{j}|$. By the definition of intersection
multiplicity, for this point it is the lattice index of the lattice spanned by ${\bf e}_{i},{\bf v}_{j}$ and
the transpose of row vectors of $A$, so the intersection multiplicity is $|{\bf e}_{i}P_{A}{\bf v}_{j}|$.
\end{proof}

\begin{rem}
Proposition \ref{prop:deg} enables us to compute the degree of $Z_{A,{\bf f}}$ without knowing ${\rm
Newt}(Z_{A,{\bf f}})$.
\end{rem}

\section{Algorithm, Implementation and Case Study}
\label{s:alg}

\subsection*{Algorithm to compute the implicit polynomial}
For an almost-toric hypersurface, we would like to compute its implicit polynomial in $n+2$ variables
$u_{0},u_{1},\ldots,u_{n+1}$ from $A$ and ${\bf f}$. An existing approach uses ideal elimination with
Gr\"{o}bner
bases, which is inefficient when $n$ is large. Based on Theorem \ref{thm:main} we have the following
alternative
approach:
\begin{enumerate}
  \item Compute $P_{A}$ from $A$, factorize $f_{0},f_{1},\ldots,f_{n+1}$ over $K$ into irreducible factors
      to get $V_{\bf f}$.
  \item Compute $P_{A}\cdot V_{\bf f}$ and verify it has rank $2$.
  \item Find ${\rm Newt}(Z_{A,{\bf f}}) $ using Theorem \ref{thm:main}.
  \item Determine all possible monomials in variables $u_{0},u_{1},\ldots,u_{n+1}$ that could appear in the
      implicit  polynomial.
  \item Use linear algebra to compute the coefficients of these monomials.
\end{enumerate}

We now explain our implementation of this method using the software {\tt Maple 17}. Among the five steps, the
first and second are trivial to implement ({\tt Maple 17} has the command {\it factor} which factors a
polynomial into irreducible factors over a given field).
\subsubsection*{Step (3)}
Theorem \ref{thm:main} tells us that the set of directed edges are the column vectors of $P_{A}\cdot V_{\bf
f}$. Then we need to arrange them in the correct order. We could project these vectors to a $2$-dimensional
space, by choosing two of the coordinates $1\le c_{1}<c_{2}\le n+2$. There is still the problem of
orientation: these directed edges admit two different arrangements. The correct orientation is determined by
the sign of the $(P_{A})_{c_{1},c_{2}}$.

Now suppose all directed edges are arranged in correct order and are the column vectors of a
matrix
\[
\begin{bmatrix}
c_{i,j}
\end{bmatrix}_{1\le i\le n+2,1\le j\le m}.
\]
If the vertex that corresponds to the first and $m$-th edges has coordinates $r_{1},\ldots,r_{n+2}$, then the
other
vertices have coordinates
\[
(r_{1}+\mathop{\sum}_{j=1}^{k}{c_{1,j}},r_{2}+\mathop{\sum}_{j=1}^{k}{c_{2,j}},\ldots,r_{n+2}+\mathop{\sum}_{j=1}^{k}{c_{n+2,j}}),k=1,\ldots
,m-1.
\]
We notice that for $1\le i\le n+2$, the $i$-th coordinate of the vector of vertices corresponds to the
exponents of $t_{i}$. Since the implicit polynomial is irreducible, the minimum of these exponents must be
$0$:
\[
\min_{1\le k\le m}\{r_{i}+\mathop{\sum}_{j=1}^{k}{c_{i,j}}\}=0, 1\le i\le n+2.
\]
Hence
\[
r_{i}=-\min_{1\le k\le m}\{\mathop{\sum}_{j=1}^{k}{c_{i,j}}\}, 1\le i\le n+2.
\]
So ${\rm Newt}(Z_{A,{\bf f}}) $ is uniquely determined by $P_{A}\cdot V_{\bf f}$ and we can compute its
vertices using the formula above.
\subsubsection*{Step (4)}
Given the vertices of ${\rm Newt}(Z_{A,{\bf f}}) $, we need to find all lattice points of this polygon.
Since all of them lie in the translation of a $2$-dimensional subspace, projection onto $2$ coordinates would
work. We find all lattice points within a convex polygon (which may be degenerate), then recover the
corresponding lattice points in ${\rm Newt}(Z_{A,{\bf f}}) $. These lattice points correspond to all possible
monomials in the implicit polynomial of $Z$: components of each vector are the exponents of variables
$u_{0},u_{1},\ldots,u_{n+1}$.\\

\subsubsection*{Step (5)}
Given all monomials of the implicit polynomial $p(u_0,\ldots,u_{n+1})$, it is enough to find the coefficients
of them. Consider the coefficients as undetermined unknowns. After the substitution
$u_{i}=\bt^{\mb{a}_i}f_{i}(x)$ we get another polynomial $q(t_1,\ldots,t_n,x)$. Each term in $q$ is the
product of a coefficient, a monomial in the variables $t_1,\ldots,t_n$ and a polynomial in the variable $x$.
We claim that all these monomials in variables $t_1,\ldots,t_n$ are the same. Assume the opposite situation.
Then we can pick all terms in $q$ with a particular monomial in variables $t_1,\ldots,t_n$ and get the
corresponding monomials in $p$, which form another polynomial $p'(u_0,\ldots,u_{n+1})$ with less terms than
$p$. Since $p$ is the implicit polynomial, polynomial $q$ must be identically zero, then $p'$ vanishes
everywhere too, so $p'$ belongs to the principal ideal of our hypersurface, a contradiction! So after the
substitution we can cancel the unique monomial in variables $t_1,\ldots,t_n$ and get a univariate polynomial
in $x$ with undetermined coefficients. Since this polynomial is identically zero, the undetermined
coefficients satisfy a system of homogeneous linear equations.\\

Next we use interpolation. Suppose there are $k$ possible monomials in the implicit polynomial, then we
replace $x$ by integers ranging from $-r$ to $r$, where $r=\lfloor \frac{k}{2} \rfloor$. Each interpolation
gives a linear equation with $k$ coefficients. Then we use the \emph{solve} command in {\tt Maple 17} to solve
these coefficients. Since this is a homogeneous linear system, the solution space should be $1$-dimensional.
This leads us to add another equation, for example $a_{1}=1$ where $a_{1}$ is one of the coefficients, to
guarantee the uniqueness of solution. After getting the solution, if all coefficients are rational, we
normalize them so that their content is $1$.

\begin{exa}\label{exa:Z}
Let $Z_{A,{\bf f}}$ be the almost-toric surface in $\PP^{3}$ parameterized by
\[
(s^{3}x^{2}(x-1):s^{2}t(x^{2}+1):st^{2}x(x+1)^{2}:t^{3}(x-1)(x-2)).
\]
Then
$
A=\begin{bmatrix}
3&2&1&0 \\
0&1&2&3 \\
\end{bmatrix}
$
and
$V_{f}=
\begin{bmatrix}
2&1&0&0&0&-3 \\
0&0&2&0&0&-2 \\
1&0&0&2&0&-3 \\
0&1&0&0&1&-2
\end{bmatrix}$. The vertices of ${\rm Newt}(Z_{A,{\bf f}}) $ are
\[
(6,0,0,6),(4,0,6,2),(2,2,8,0),(1,4,7,0),(0,7,4,1),(2,6,0,4).
\]
We find all monomials in the implicit polynomial to be
\[
\begin{split}
&u_{0}^6u_{3}^6, u_{0}^5u_{1}u_{2}u_{3}^5, u_{0}^5u_{2}^3u_{3}^4, u_{0}^4u_{1}^3u_{3}^5,
u_{0}^4u_{1}^2u_{2}^2u_{3}^4, u_{0}^4u_{1}u_{2}^4u_{3}^3, u_{0}^4u_{2}^6u_{3}^2, u_{0}^3u_{1}^4u_{2}u_{3}^4,
u_{0}^3u_{1}^3u_{2}^3u_{3}^3,\\ &u_{0}^3u_{1}^2u_{2}^5u_{3}^2, u_{0}^3u_{1}u_{2}^7u_{3},
u_{0}^2u_{1}^6u_{3}^4, u_{0}^2u_{1}^5u_{2}^2u_{3}^3, u_{0}^2u_{1}^4u_{2}^4u_{3}^2, u_{0}^2u_{1}^3u_{2}^6u_{3},
u_{0}^2u_{1}^2u_{2}^8, u_{0}u_{1}^6u_{2}^3u_{3}^2,\\ &u_{0}u_{1}^5u_{2}^5u_{3}, u_{0}u_{1}^4u_{2}^7,
u_{1}^7u_{2}^4u_{3}.
\end{split}
\]
We use interpolation to solve for the coefficients of these monomials and obtain the implicit polynomial
\[
\begin{split}
&8u_{0}^6u_{3}^6+52u_{0}^5u_{1}u_{2}u_{3}^5-28u_{0}^5u_{2}^3u_{3}^4-48u_{0}^4u_{1}^3u_{3}^5+58u_{0}^4u_{1}^2u_{2}^2u_{3}^4-82u_{0}^4u_{1}u_{2}^4u_{3}^3\\
&+25u_{0}^4u_{2}^6u_{3}^2-1312u_{0}^3u_{1}^4u_{2}u_{3}^4-175u_{0}^3u_{1}^3u_{2}^3u_{3}^3+476u_{0}^3u_{1}^2u_{2}^5u_{3}^2-50u_{0}^3u_{1}u_{2}^7u_{3}\\
&+72u_{0}^2u_{1}^6u_{3}^4-5760u_{0}^2u_{1}^5u_{2}^2u_{3}^3+5056u_{0}^2u_{1}^4u_{2}^4u_{3}^2-1194u_{0}^2u_{1}^3u_{2}^6u_{3}+25u_{0}^2u_{1}^2u_{2}^8\\
&-4176u_{0}u_{1}^6u_{2}^3u_{3}^2-3256u_{0}u_{1}^5u_{2}^5u_{3}-72u_{0}u_{1}^4u_{2}^7-576u_{1}^7u_{2}^4u_{3}.
\end{split}
\]
\end{exa}

\subsection*{Efficiency Test}
To test the efficiency of our algorithm, we try some simple examples using our implementation in {\tt Maple
17} and using ideal elimination in {\tt Macaulay 2}. The result is in Table \ref{tab:sim}:

\begin{table}[h]
\begin{tabular}{|c|c|c|c|c|}
  \hline
  sample & degree & \# of terms & our time cost & ideal elimination's time cost \\
  \hline
  Example \ref{exa:H} & 22 & 24 & 0.094s & 1.875s\\
  \hline
  Example \ref{exa:Z} & 10 & 16 & 0.047s & 0.078s.\\
  \hline
\end{tabular}
\caption{Simple Examples}\label{tab:sim}
\end{table}

We then try some examples that both {\tt Macaulay 2} and {\tt Sagemath} cannot solve in a reasonable time. We
generate some samples as the input using the following method: let $n$ be the dimension of the torus, $d$ the
degree of the homogeneous monomials and $k$ a positive integer. Then we choose $n+2$ degree $d$ monomials
randomly from all possible $\binom{n+d-1}{d}$ choices. For the univariate polynomials, we choose $n+2$
polynomials of the form $(x-2)^{*}(x-1)^{*}x^{*}(x+1)^{*}(x+2)^{*}$, where each $*$ is a random integer
between $0$ and $k$. Table \ref{tab:com} shows the time needed to find the implicit polynomial of almost-toric
hypersurfaces given by randomly generated inputs. It turns out that our implementation improves the efficiency
of finding the implicit polynomial of almost-toric hypersurfaces.

\begin{table}[hb]
\begin{tabular}{|c|c|c|c|c|c|c|c|}
  \hline
  sample & degree & \# of terms & time cost & sample & degree & \# of terms & time cost\\
  \hline
  1 & 213 & 109 & 12.484s & 6 & 179 & 97 & 8.110s \\
  \hline
  2 & 109 & 80 & 1.594s & 7 & 40 & 32 & 0.156s \\
  \hline
  3 & 172 & 129 & 10.421s & 8 & 27 & 14 & 0.140s \\
  \hline
  4 & 474 & 275 & 156.969s & 9 & 79 & 71 & 1.766s \\
  \hline
  5 & 291 & 137 & 20.375s & 10 & 281 & 148 & 20.719s \\
  \hline
\end{tabular}
\caption{$n=4,d=4,k=5$}\label{tab:com}
\end{table}

\end{document}